\newtheorem{thm}{Theorem}[section]
\newtheorem{cor}[thm]{Corollary}
\newtheorem{lem}[thm]{Lemma}
\newtheorem{prop}[thm]{Proposition}
\theoremstyle{mydefinition}
\theoremstyle{myremark}
\newtheorem{exa}[thm]{Example}
\title{On $P$-partitions Extended by Two-Rowed Plane Partitions}
\author{Jingxuan Li$^{1}$, Feihu Liu$^{2}$, and Guoce Xin$^{3}$
\\[2mm]
{\small $^{1, 2, 3}$ School of Mathematical Sciences,}\\[-0.8ex]
{\small Capital Normal University, Beijing, 100048, P.R.~China}\\
{\small $^1$ Email address: jingxuanli27@163.com}\\
{\small $^2$ Email address: liufeihu7476@163.com}\\
{\small $^3$ Email address: guoce\_xin@163.com}
}
\date{December 5, 2024}
\begin{document}

\maketitle

\begin{abstract}
Inspired by Gansner's elegant $k$-trace generating function for rectangular plane partitions, we introduce two novel operators,
$\varphi_{z}$ and $\psi_{z}$, along with their combinatorial interpretations. Through these operators, we derive a new formula for $P$-partitions of posets extended by two-rowed plane partitions. This formula allows us to compute explicit enumerative generating functions for various classes of $P$-partitions. Our findings encompass skew plane partitions, diamond-related two-rowed plane partitions, an extended $V$-poset, and ladder poset extensions, enriching the theory of $P$-partitions.
\end{abstract}

\noindent
\begin{small}
 \emph{Mathematic subject classification}: Primary 05A15; Secondary 05A17, 11P81.
\end{small}

\noindent
\begin{small}
\emph{Keywords}: Two-rowed plane partitions; $P$-partitions; Skew plane partitions; Linear extensions; $q$-shifted factorials.
\end{small}

\section{Introduction}

A \emph{plane partition} is an array $\pi=(\pi_{ij})_{i,j\geq 1}$ of nonnegative integers such that $\pi$ has finitely many nonzero entries and is weakly decreasing in rows and columns. If the sum of all entries, $\sum_{i,j}\pi_{ij}=n$, then $\pi$ is referred to as a plane partition of $n$. Regarding the history of plane partitions, please refer to \cite{AndrewsDream,Bressoud} and \cite[P. 440-442]{RP.Stanley2024}.

A \emph{part} of a plane partition $\pi=(\pi_{i,j})$ is a positive entry $\pi_{i,j}>0$.
In \cite{Stanley-trace}, Stanley defined the \emph{trace} of $\pi$ as $\mathrm{tr}(\pi)=\sum\pi_{i,i}$.
Consider a $r\times c$ matrix $\pi=(\pi_{i,j})$ over nonnegative integers
 such that $\pi_{i,j}\geq \pi_{i,j+1}$ and $\pi_{i,j}\geq \pi_{i+1,j}$. Such a matrix $\pi$ is a plane partition of $n=\sum_{i,j}\pi_{i,j}$ with at most $r$ rows and $c$ columns, and is called a \emph{rectangular plane partition}.
Let $P_{r,c}(X)$ denote the complete generating function for all plane partitions with at most $r$ rows and $c$ columns, i.e.,
$$P_{r,c}(X)=\sum_{\pi}x_{1,1}^{\pi_{1,1}}\cdots x_{1,c}^{\pi_{1,c}}\cdots x_{r,1}^{\pi_{r,1}}\cdots x_{r,c}^{\pi_{r,c}},$$
where the sum ranges over all rectangular plane partitions with at most $r$ rows and $c$ columns.

Setting \(x_{i,j} = x\) for all \(i,j\), we obtain the enumerative generating function \(q_{r,c}(x) := P_{r,c}(X)|_{x_{i,j} = x}\).
When $r,c\rightarrow \infty$, MacMahon \cite{MacMahon78} conjectured the following formula
\begin{align}\label{Qinfty}
q_{\infty,\infty}(x)=\prod_{n=1}^{\infty}\frac{1}{(1-x^n)^n}.
\end{align}
MacMahon \cite{MacMahon97} also conjectured
\begin{align}\label{Q-RC}
q_{r,c}(x)=\prod_{i=1}^{r}\prod_{j=1}^{c} \frac{1}{1-x^{i+j-1}}.
\end{align}
Obviously, \eqref{Q-RC} reduces to \eqref{Qinfty} when $r,c\rightarrow \infty$.
For a proof of a stronger result of \eqref{Q-RC}, see \cite[Sect. 495]{MacMahon19}. A combinatorial proof of \eqref{Q-RC} appears in \cite{Bender72}.

We now outline  a generalization of \eqref{Q-RC} from \cite{Gansner81}.
For an integer $k$ with $-r+1\leq k\leq c-1$, the \emph{$k$-trace} $\mathrm{tr}_k(\pi)$ of $\pi=(\pi_{i,j})$ is defined as
$$\mathrm{tr}_k(\pi)=\sum_{1\leq i\leq r;\ 1\leq j\leq c \atop j-i=k} \pi_{i,j}.$$
For example, the rectangular plane partition
\begin{align*}
\pi=
\begin{matrix}
7&7 &6 & 5 \\
7&5 & 4& 2 \\
6&3 & 0& 0 \\
\end{matrix}
\end{align*}
of $52$ has $\mathrm{tr}_{-2}(\pi)=6$, $\mathrm{tr}_{-1}(\pi)=10$, $\mathrm{tr}_{0}(\pi)=12$, $\mathrm{tr}_{1}(\pi)=11$, $\mathrm{tr}_{2}(\pi)=8$, and $\mathrm{tr}_{3}(\pi)=5$. Define $\tau_{r,c}(t_{-r+1},\ldots, t_{-1}; t_0,\ldots, t_{c-1};n)$ to be the number of such plane partitions of $n$ with $k$-traces $\mathrm{tr}_{k}(\pi)=t_k$, $-r+1\leq k\leq c-1$. Let
\begin{align*}
&T_{r,c}(z_{-r+1},\ldots,z_{-1};z_0,\ldots,z_{c-1}; q)
\\=&\sum_{n=0}^{\infty}\sum_{t_{-r+1}=0}^{\infty}\cdots\sum_{t_{c-1}=0}^{\infty}\tau_{r,c}(t_{-r+1},\ldots, t_{-1}; t_0,\ldots, t_{c-1};n)z_{-r+1}^{t_{-r+1}}\cdots z_{-1}^{t_{-1}}z_0^{t_0}\cdots z_{c-1}^{t_{c-1}}q^n.
\end{align*}

Gansner derived the following elegant result.
\begin{thm}{\em \cite{Gansner81}}\label{Theorem-Gansner}
For positive integers $r,c$, we have
$$T_{r,c}(z_{-r+1},\ldots,z_{-1};z_0,\ldots,z_{c-1}; q)=\prod_{i=1}^r\prod_{j=1}^c \frac{1}{1-z_{-i+1}z_{-i+2}\cdots z_{j-1} q^{i+j-1}}.$$
\end{thm}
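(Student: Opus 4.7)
The plan is to encode each rectangular plane partition as a family of non-intersecting lattice paths and apply the Lindström--Gessel--Viennot (LGV) lemma to rewrite $T_{r,c}$ as a determinant, which will then evaluate to the displayed product.

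\textbf{Path model and weighting.} There is a standard bijection between plane partitions $\pi=(\pi_{i,j})$ of shape at most $r \times c$ and families of $r$ non-intersecting lattice paths with fixed sources $A_1,\dots,A_r$ and sinks $B_1,\dots,B_r$. One way to set it up is to slice $\pi$ along its diagonals: since $\pi_{i,j} \ge \pi_{i+1,j+1}$, the $k$-th diagonal reads off as a partition $\pi^{(k)}$, consecutive diagonals interlace, and this interlacing sequence is precisely the data of the $r$ paths. I would then weight the lattice so that each cube in the stack at position $(i,j)$ carries the factor $z_{j-i}\,q$; multiplied over all $\pi_{i,j}$ cubes at $(i,j)$ and all positions, this yields exactly $\prod_k z_k^{t_k}\,q^{|\pi|}$, matching the monomial required in the definition of $T_{r,c}$.

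\textbf{Determinant and evaluation.} Applying LGV gives $T_{r,c} = \det(f_{ij})_{1 \le i,j \le r}$, where $f_{ij}$ is the single-path weighted generating function from $A_i$ to $B_j$. Each $f_{ij}$ is a rational function whose denominator is built from terms $1 - z_{-i+1}\cdots z_{j'-1}\,q^{i+j'-1}$ produced by the geometric sums encountered along a path. The remaining task is to simplify this $r \times r$ determinant down to the product $\prod_{i,j}(1-z_{-i+1}\cdots z_{j-1}q^{i+j-1})^{-1}$, which I would attempt via elementary column reductions followed by a Cauchy/Vandermonde-type manipulation.

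\textbf{Main obstacle.} The crux is the determinant evaluation together with the bookkeeping that links path weights to the $k$-traces. The suggestive fact that $\{-i+1,\dots,j-1\}$ has size $i+j-1$---which is exactly the set of diagonals covered by a hook with corner at $(i,j)$---explains why the LGV determinant should factor so cleanly and also points to an alternative approach, closer in spirit to Gansner's original argument: iterate a Bender--Knuth-type involution diagonal by diagonal, building $\pi$ up one hook at a time and reading off the generating function directly from the exponent statistics. Either way, the delicate point is aligning the diagonal weights $z_k$ with the geometric-series data so that the product form emerges without residual cross-terms.
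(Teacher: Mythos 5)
This statement is quoted from \cite{Gansner81}; the paper offers no proof of it, only the remarks that Gansner's original argument is a combinatorial bijection (via the Burge correspondence) and that Andrews and Paule later reproved it by power-series manipulation. So there is no in-paper proof to compare against, and your proposal has to stand on its own. As written it does not: it is a strategy outline in which the two steps that actually constitute the theorem are deferred. First, the weighting claim --- that slicing $\pi$ into interlacing diagonal partitions and assigning each cube at $(i,j)$ the weight $z_{j-i}q$ turns the non-intersecting-path weight into $\prod_k z_k^{\mathrm{tr}_k(\pi)}q^{|\pi|}$ --- is plausible but never verified; you neither specify the sources/sinks and step weights nor check that the single-path generating functions $f_{ij}$ have the denominators you assert. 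Second, and more seriously, the evaluation of the resulting $r\times r$ determinant into $\prod_{i,j}\left(1-z_{-i+1}\cdots z_{j-1}q^{i+j-1}\right)^{-1}$ is exactly where all the difficulty lives, and ``I would attempt via elementary column reductions followed by a Cauchy/Vandermonde-type manipulation'' is a statement of intent, not an argument. For the fully $z$-refined weight this evaluation is not a routine computation, and nothing in the proposal shows the cross-terms cancel.

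Your closing observation --- that $\{-i+1,\dots,j-1\}$ has $i+j-1$ elements and is the set of diagonals met by a hook cornered at $(i,j)$ --- is in fact the germ of the clean known proofs, and developing it would be a better route than LGV. Gansner's bijection (equivalently, a trace-refined Hillman--Grassl correspondence) peels off rim hooks, each hook at corner $(i,j)$ contributing one factor $\left(1-z_{-i+1}\cdots z_{j-1}q^{i+j-1}\right)^{-1}$, which yields the product with no determinant at all. Alternatively, the diagonal-slice decomposition you describe is the Bender--Knuth bijection to pairs of column-strict tableaux, and the theorem is then the Cauchy identity $\sum_\lambda s_\lambda(u)s_\lambda(v)=\prod_{i,j}(1-u_iv_j)^{-1}$ specialized at $u_i=z_{-i+1}\cdots z_{-1}q^{i-1}$ and $v_j=z_0\cdots z_{j-1}q^{j}$, since then $u_iv_j=z_{-i+1}\cdots z_{j-1}q^{i+j-1}$. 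Either completion would make this a proof; as it stands the proposal identifies the right landmarks but leaves the essential verifications undone.
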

In Theorem \ref{Theorem-Gansner}, setting $z_{k}=1$ for all $k$ yields \eqref{Q-RC}, and setting $z_k=1$ for all $k\neq 0$ yields Stanley's trace theorem \cite[Theorem 2.2]{Stanley-trace}.
Andrews and Paule \cite{Andrews12} also proved Theorem \ref{Theorem-Gansner} by using basic power series arithmetic, an approach distinct from Gansner's original proof \cite{Gansner81}, which is based on a combinatorial bijection.

When $r=2$, a rectangular plane partition $\pi$ is known as a \emph{two-rowed plane partition}.
Two-rowed plane partitions have been a subject of extensive research; see, e.g., \cite{Andrews2,Gordon62,Sudler65}.
Inspired by the results related to the $k$-traces, we consider two operators $\varphi_{z}$ and $\psi_{z}$ defined below.

Let $K$ be a ring, such as the ring $\mathbb{Z}$ of integers. Let
$$F(x,y;P)=\sum_{i\geq j\geq 0}a_{ij}x^iy^j,\ \ \ a_{ij}\in K$$
be the bivariate generating function of a combinatorial object $P$.
The linear operators $\varphi_{z}$ and $\psi_{z}$ are defined by
\begin{align}\label{Varphi-map}
\varphi_{z}\odot F(x,y;P)&=\frac{F(xz,yz;P)-yF(xyz,z;P)}{(1-x)(1-y)},\\
\label{Psimap}
\psi_{z}\odot F(x,y;P)&=\frac{F(z,xz;P)-xF(xz,z;P)}{1-x}.
\end{align}

We will prove the following result.
\begin{thm}\label{FullTwoPlane}
Let $\varphi_{z}$ and $\psi_{z}$ be the operators as defined in \eqref{Varphi-map} and \eqref{Psimap}. For $n\geq 2$, we have
\begin{align*}
&\psi_{z_n}\odot \varphi_{z_{n-1}}\odot \varphi_{z_{n-2}}\odot \cdots\odot \varphi_{z_1}\odot F(x,y;P)
\\=&\frac{F(z_1 \cdots z_n, xz_1\cdots z_n;P)-xF(xz_1\cdots z_n,z_1\cdots z_n;P)}{(1-x)(1-xz_n)(1-xz_{n-1} z_n)\cdots(1-xz_2\cdots z_n)(1-z_n)(1-z_{n-1}z_n)\cdots(1-z_2\cdots z_n)}.
\end{align*}
\end{thm}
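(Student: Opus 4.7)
The plan is to prove the theorem by induction on $n$, powered by a single algebraic identity that lets each $\varphi$ get absorbed into the outer $\psi$. Specifically, I will first establish the reduction lemma
\[
\psi_w \odot \varphi_z \odot B(x,y) \;=\; \frac{B(zw,\, xzw) - x\, B(xzw,\, zw)}{(1-x)(1-w)(1-xw)}
\]
for an arbitrary bivariate function $B(x,y)$ and indeterminates $z,w$. Equivalently, the right-hand side equals $\psi_{zw} \odot B(x,y)$ divided by $(1-w)(1-xw)$. This is the engine of the whole argument.

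To prove the lemma, I write $C(x,y) := \varphi_z \odot B(x,y) = (B(xz,yz) - y B(xyz,z))/((1-x)(1-y))$, then form $\psi_w \odot C(x,y)$ by substituting $(x,y)\mapsto(w,xw)$ and $(x,y)\mapsto(xw,w)$. Each of the two evaluations produces a copy of the term $B(xw^2 z, z)$ coming from the $-yB(xyz,z)$ half of $\varphi_z$, with coefficients $-xw$ and $+xw$ respectively, so those terms cancel. After the cancellation the factors $(1-w)$ and $(1-xw)$ appearing in the two denominators match up automatically, leaving exactly the stated rational expression. This single cancellation is the only computational obstacle in the whole proof; once it is verified, nothing else requires genuine work.

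With the lemma in hand, the induction is short. The base case $n=2$ is precisely the lemma applied with $z=z_1$, $w=z_2$, $B=F$. For the inductive step, set $B := \varphi_{z_{n-1}} \odot \cdots \odot \varphi_{z_1} \odot F(x,y;P)$, so that the theorem's operator at level $n$ is $\psi_{z_n} \odot B$ and at level $n+1$ is $\psi_{z_{n+1}} \odot \varphi_{z_n} \odot B$. Applying the lemma with $z=z_n$, $w=z_{n+1}$ yields
\[
\psi_{z_{n+1}} \odot \varphi_{z_n} \odot B \;=\; \frac{\psi_{z_n z_{n+1}} \odot B}{(1-z_{n+1})(1-xz_{n+1})}.
\]
Since $\psi_{z_n z_{n+1}} \odot B$ equals $\psi_{z_n} \odot B$ with the formal substitution $z_n \mapsto z_n z_{n+1}$, the inductive hypothesis turns this numerator into the theorem's $n$-th formula with every factor $z_k \cdots z_n$ replaced by $z_k \cdots z_{n+1}$; the extra division by $(1-z_{n+1})(1-xz_{n+1})$ then supplies precisely the missing $k=n+1$ pair of denominator factors, yielding the $(n+1)$-st case. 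The entire proof thus reduces to the cancellation in the lemma plus elementary bookkeeping with the geometric-series-like factors in the denominator.
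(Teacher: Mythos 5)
Your proof is correct, and it rests on the same algebraic cancellation that drives the paper's argument, but you organize the induction differently. The paper peels off the \emph{innermost} operator: it applies the induction hypothesis to the composite $\psi_{z_n}\odot\varphi_{z_{n-1}}\odot\cdots\odot\varphi_{z_2}$ acting on the function $\varphi_{z_1}\odot F$, and then re-verifies the cancellation of the cross terms $F(xz_1z_2^2\cdots z_n^2,z_1)$ inside the inductive step (the same cancellation also appears separately in its base case $n=2$). You instead peel off the \emph{outermost} pair by isolating the operator identity
\[
\psi_{w}\odot\varphi_{z}\odot B \;=\; \frac{\psi_{zw}\odot B}{(1-w)(1-xw)},
\]
valid for arbitrary $B$, so the cancellation is carried out exactly once; the inductive step then reduces to the substitution $z_n\mapsto z_nz_{n+1}$ in the level-$n$ formula (legitimate since $B=\varphi_{z_{n-1}}\odot\cdots\odot\varphi_{z_1}\odot F$ does not involve $z_n$) followed by matching the two new denominator factors. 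Your version buys a cleaner separation of the one genuine computation from the bookkeeping, and the absorption lemma is a reusable structural fact about the operators that the paper does not state explicitly; the paper's version avoids introducing an auxiliary lemma at the cost of repeating the cancellation with more cluttered arguments. Both are complete and correct.
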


In Section 3, we will provide a combinatorial interpretation for these two operators. Within this framework, the generating function in Theorem \ref{FullTwoPlane}
is interpreted as the $k$-trace generating functions of certain extended two-rowed plane partitions (see Figure \ref{FullGF}). This allows us to calculate many enumerative generating functions related to extended two-rowed plane partitions.
Consequently, we derive the number of linear extensions for these posets.

This paper is organized as follows.
In Section 2, we introduce the basic knowledge related to $P$-partitions, order polynomials, and linear extensions.
Section 3 delves into the combinatorial interpretation of the operators $\varphi_{z}$ and $\psi_{z}$, providing a proof of Theorem \ref{FullTwoPlane} along with its combinatorial interpretation.
Sections 4 and 5 are applications of Theorem \ref{FullTwoPlane}
to the determination of the enumerative generating functions and linear extensions.
Section 4 focuses on skew plane partitions. The corresponding order polynomials are known to have determinant formulas.
Section 5 considers three new types of posets:
Two-rowed plane partitions with diamond posets, an extension of the $V$-poset, and an extension of the ladder poset.
Throughout this paper, $\mathbb{Z}$, $\mathbb{N}$, and $\mathbb{P}$ denote the set of all integers, non-negative integers, and positive integers, respectively.

\section{$P$-Partitions}
We assume that the readers possess basic knowledge of posets (see, e.g., \cite[Section 3]{Stanley-Vol-1}).
Let $(P,\preceq)$ be a finite \emph{partially ordered set} (\emph{poset} for short).
We say that $t$ \emph{covers} $s$ if $s\prec t$ and there is no $c\in P$ such that $s\prec c\prec t$.
The \emph{Hasse diagram} of a finite poset $P$ is a graph where vertices are the elements of $P$, edges represent cover relations, and  if $s\prec t$, then $t$ is drawn above $s$ in the diagram.

We now introduce the concept of $P$-partitions, which were initially studied by MacMahon \cite[Subsections 439,441]{MacMahon19}.
In 1970, Knuth \cite{Knuth70} provided clarification on MacMahon's work within the theory of $P$-partitions.
However, the first comprehensive development of $P$-partitions was presented by Stanley in \cite{Stanley-Order-parti71,Stanley-Order-parti72}.

A $P$-partition is a map $\sigma: P\rightarrow \mathbb{N}$ that satisfies the following condition:
If $s\prec t$ in $P$, then $\sigma(s)\geq \sigma(t)$. In other words, $\sigma$ is \emph{order-reversing}.
If the sum $\sum_{s\in P}\sigma(s)$ equals $n$, denoted as $|\sigma|=n$, then $\sigma$ is referred to as a \emph{$P$-partition of $n$}.
It is convenient to represent $s\prec t$ with an arrow $s\to t$. Consequently, a $P$-partition is a vertex labeling $\sigma$ such that $s\to t$ implies $\sigma(s)\geq \sigma(t)$.

We denote the set of all $P$-partitions on the poset $P$ as $\pi(P)$.
The fundamental generating function associated with $\pi(P)$ is given by
$$F_P(x_1,x_2,\ldots,x_p)=\sum_{\sigma\in \pi(P)}x_1^{\sigma(1)}x_2^{\sigma(2)}\cdots x_p^{\sigma(p)}.$$
By setting $x_i=q$ for all $i$, we obtain the \emph{enumerative generating function} of $\pi(P)$:
\begin{align*}
\mathrm{PF}(\pi(P))=F_P(q,q,\ldots,q)=\sum_{\sigma\in \pi(P)} q^{|\sigma|}.
\end{align*}

The set $[p]$ with its usual order forms a $p$-element chain, i.e., a poset whose elements are totally ordered. This poset is denoted $\mathbf{p}$.
Let $\#P=p$. An order-preserving bijection $\tau: P\rightarrow \mathbf{p}$ is called a \emph{linear extension} of $P$.
The number of linear extensions of $P$ is denoted $e(P)$.

Now, suppose $(P,\preceq)$ is a finite poset on $[p]:=\{1,2,\ldots,p\}$.
We may identify a linear extension $\tau: P\rightarrow \mathbf{p}$ with the permutation $\tau^{-1}(1),\ldots,\tau^{-1}(p)$.
The set of all $e(P)$ permutations of $[p]$ obtained in this manner is denoted $\mathcal{L}(P)$ and is referred to as the \emph{Jordan-H\"older set} of $P$.
Let $\mu=\mu_1\mu_2\cdots \mu_p$ be a permutation. For $1\leq i\leq p-1$, if $\mu_i>\mu_{i+1}$, then $i$ is a \emph{descent} of $\mu$. Define the \emph{descent set} $D_{\mu}$ of $\mu$ by
$$D_{\mu}=\{ i : \mu_{i}> \mu_{i+1}\} \subseteq [p-1].$$
Let $d(\mu)$ be the number of descents of permutation $\mu$, i.e., $d(\mu)=\# D_{\mu}$.

\begin{thm}{\em \cite[Theorem 3.15.5]{Stanley-Vol-1}}
Let $(P,\preceq)$ be a finite poset on $[p]:=\{1,2,\ldots,p\}$. Then
\begin{align*}
F_P(x_1,x_2,\ldots, x_p)=\sum_{\mu\in \mathcal{L}(P)}\frac{\prod_{j\in D_{\mu}}x_{\mu_1}x_{\mu_2}\cdots x_{\mu_j}}{\prod_{i=1}^p (1-x_{\mu_1}x_{\mu_2}\cdots x_{\mu_i})}.
\end{align*}
\end{thm}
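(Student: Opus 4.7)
The plan is to decompose $\pi(P)$ into disjoint subsets indexed by linear extensions $\mu \in \mathcal{L}(P)$ via the classical \emph{Fundamental Lemma of $P$-partitions}, and then evaluate each piece as a product of geometric series in appropriately chosen monomial variables.

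First I would define, for each $\mu = \mu_1 \cdots \mu_p \in \mathcal{L}(P)$, the set
$$\pi_\mu(P) = \{\sigma : [p] \to \mathbb{N} \mid \sigma(\mu_1) \geq \sigma(\mu_2) \geq \cdots \geq \sigma(\mu_p),\ \sigma(\mu_i) > \sigma(\mu_{i+1}) \text{ for } i \in D_\mu\},$$
and prove that $\pi(P) = \bigsqcup_{\mu \in \mathcal{L}(P)} \pi_\mu(P)$. Existence of an associated $\mu$ for a given $\sigma \in \pi(P)$ comes from sorting the labels $1, 2, \ldots, p$ by $\sigma$-value in weakly decreasing order, breaking ties so that within each block of equal $\sigma$-values the labels appear in strictly increasing natural order; the order-reversing condition on $\sigma$ guarantees that the resulting $\mu$ is a linear extension of $P$. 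Uniqueness follows from the observation that the strict-descent requirement forces any valid $\mu$ to be increasing on each level set of $\sigma$, so the tie-breaking rule is the only option.

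Next I would compute $\sum_{\sigma \in \pi_\mu(P)} x_1^{\sigma(1)} \cdots x_p^{\sigma(p)}$ for a fixed $\mu$. Setting $\lambda_i = \sigma(\mu_i)$ and introducing the differences $a_i = \lambda_i - \lambda_{i+1}$ for $i < p$ with $a_p = \lambda_p$, the chain condition translates into $a_i \in \mathbb{N}$ for $i \notin D_\mu$ (and $i = p$) and $a_i \geq 1$ for $i \in D_\mu$. Writing $\lambda_i = a_i + a_{i+1} + \cdots + a_p$ and exchanging the order of the product, one obtains
$$\prod_{i=1}^p x_{\mu_i}^{\lambda_i} = \prod_{k=1}^p (x_{\mu_1} x_{\mu_2} \cdots x_{\mu_k})^{a_k},$$
so that with $y_k := x_{\mu_1} \cdots x_{\mu_k}$ the sum factors as an independent product of geometric series in the $y_k$. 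Each $k \in D_\mu$ contributes $y_k/(1-y_k)$ and each remaining $k$ contributes $1/(1-y_k)$, which is exactly the summand in the claimed formula. Summing the identity over all $\mu \in \mathcal{L}(P)$ and invoking the disjoint decomposition established in the previous step yields the theorem.

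The main obstacle is the Fundamental Lemma itself: formulating the tie-breaking rule precisely and verifying both the existence and the uniqueness of the linear extension associated to a given $\sigma$, while checking that the strict descent constraint at $i \in D_\mu$ matches the integer comparison $\mu_i > \mu_{i+1}$. Once this combinatorial bookkeeping is in place, the algebraic step is a routine geometric-series expansion.
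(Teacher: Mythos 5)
The paper offers no proof of this statement; it is quoted from Stanley (EC1, Theorem 3.15.5), so the only thing to compare your argument against is Stanley's own, which your sketch essentially reproduces: decompose $\pi(P)$ via the Fundamental Lemma of $P$-partitions, then apply the telescoping substitution $a_i=\lambda_i-\lambda_{i+1}$ to turn each piece into a product of geometric series in $y_k=x_{\mu_1}\cdots x_{\mu_k}$. The algebraic half of your outline is complete and correct as written, including the identification of the numerator $\prod_{k\in D_\mu}y_k$ with the strict descents.

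The one step that needs more care than you give it is the sentence ``the order-reversing condition on $\sigma$ guarantees that the resulting $\mu$ is a linear extension of $P$.'' In the tie case $\sigma(s)=\sigma(t)$ with $s\prec_P t$, your tie-breaking rule places $s$ before $t$ only when $s<t$ as integers, so the existence argument (and indeed the theorem as literally stated) requires the poset on $[p]$ to be \emph{naturally labeled}, i.e.\ $s\prec_P t$ implies $s<t$ in $\mathbb{Z}$. Without this hypothesis the formula is false: for $P$ on $\{1,2\}$ with $2\prec_P 1$, the generating function of order-reversing maps is $1/\bigl((1-x_2)(1-x_1x_2)\bigr)$, while the right-hand side is $x_2/\bigl((1-x_2)(1-x_1x_2)\bigr)$, since the unique linear extension $\mu=21$ has a descent at $1$. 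Stanley's general $(P,\omega)$ framework absorbs this by building strictness conditions into the definition of a $(P,\omega)$-partition; since the present paper defines $P$-partitions with no strict inequalities, you should state the natural-labeling assumption explicitly and note that it is exactly what legitimizes the tie-breaking step. With that hypothesis added, both the existence and uniqueness parts of your Fundamental Lemma go through and the proof is complete.
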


Define the \emph{major index} $\mathrm{maj}(\mu)$ of $\mu$ by $\mathrm{maj}(\mu)=\sum_{j\in D_{\mu}} j$.
\begin{thm}{\em \cite[Theorem 3.15.7]{Stanley-Vol-1}}
Let $(P,\preceq)$ be a finite poset on $[p]:=\{1,2,\ldots,p\}$. Then
\begin{align*}
\mathrm{PF}(\pi(P))=\frac{\sum_{\mu\in \mathcal{L}(P)} q^{\mathrm{maj}(\mu)}}{(1-q)(1-q^2)\cdots (1-q^p)}.
\end{align*}
\end{thm}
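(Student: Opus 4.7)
The plan is to deduce this formula as a direct specialization of the preceding theorem (the multivariate $F_P$ formula indexed by linear extensions). Since $\mathrm{PF}(\pi(P)) = F_P(q, q, \ldots, q)$ by definition, I only need to substitute $x_i = q$ for all $i$ into the right-hand side of the previous theorem and simplify term by term.

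First I would observe that under the substitution $x_i = q$ for all $i \in [p]$, the products $x_{\mu_1} x_{\mu_2} \cdots x_{\mu_k}$ appearing in both the numerator and denominator reduce to $q^k$, regardless of which linear extension $\mu \in \mathcal{L}(P)$ we are examining. Concretely, the denominator of the $\mu$-summand becomes
\begin{equation*}
\prod_{i=1}^p \bigl(1 - x_{\mu_1} x_{\mu_2} \cdots x_{\mu_i}\bigr)\Big|_{x_i = q} = \prod_{i=1}^p (1 - q^i),
\end{equation*}
which no longer depends on $\mu$ and can therefore be pulled out of the sum over $\mathcal{L}(P)$.

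Next I would handle the numerator of the $\mu$-summand. Each factor $x_{\mu_1} \cdots x_{\mu_j}$ with $j \in D_\mu$ becomes $q^j$, so the product over all descents of $\mu$ becomes
\begin{equation*}
\prod_{j \in D_\mu} q^j = q^{\sum_{j \in D_\mu} j} = q^{\mathrm{maj}(\mu)},
\end{equation*}
by the definition of the major index. Combining these two simplifications and factoring out the $\mu$-independent denominator yields exactly the claimed identity.

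There is no real obstacle here; the statement is essentially a corollary, and the only thing to verify carefully is that every product of $x$'s appearing in the previous theorem indeed collapses to a pure power of $q$ whose exponent is the length of the prefix, so that the exponents in the numerator add up precisely to $\mathrm{maj}(\mu)$. Once the substitution is made cleanly, the formula drops out immediately.
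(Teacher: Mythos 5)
Your proposal is correct: since the paper defines $\mathrm{PF}(\pi(P))=F_P(q,\ldots,q)$, specializing $x_i=q$ in the preceding theorem collapses each prefix product $x_{\mu_1}\cdots x_{\mu_k}$ to $q^k$, turning the numerator into $q^{\mathrm{maj}(\mu)}$ and the denominator into the $\mu$-independent factor $(1-q)\cdots(1-q^p)$. The paper cites this result from Stanley without proof, and your derivation is exactly the standard one by which it follows from the multivariate formula.
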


Now, let's consider an alternative method for counting $P$-partitions.
For $m\in \mathbb{P}$, let $\aleph(P,m)$ denote the number of $P$-partitions $P\rightarrow \mathbf{m} \uplus \{0\}$.

Let $\Omega_{P}(m)$ denote the number of order-preserving maps $\rho: P\rightarrow \mathbf{m}$.
In fact, $\Omega_{P}(m)$ is a polynomial function \cite[P. 334]{Stanley-Vol-1} of $m$ of degree $p$ with leading coefficient $\frac{e(P)}{p!}$.
Thus $\Omega_{P}(m)$ is called the \emph{order polynomial} of $P$.
By replacing $\rho(t)$ with $m+1-\rho(t)$, we see that $\Omega_P(m)$ is also the number of order-reversing maps $P\rightarrow \mathbf{m}$, i.e., the number of $P$-partitions $P\rightarrow \mathbf{m}$.
Therefore, we have $\aleph(P,m)=\Omega_P(m+1)$ for any $m\geq 0$.

\begin{thm}{\em \cite[Theorem 3.15.8]{Stanley-Vol-1}}
Let $(P,\preceq)$ be a finite poset on $[p]:=\{1,2,\ldots,p\}$. Then
\begin{align*}
\sum_{m\geq 0}\aleph(P,m)x^m=\sum_{m\geq 0}\Omega_{P}(m+1)x^m=\frac{\sum_{\mu \in \mathcal{L}(P)}x^{d(\mu)}}{(1-x)^{p+1}}.
\end{align*}
\end{thm}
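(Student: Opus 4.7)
The plan is to exploit the Jordan--H\"older decomposition of $\pi(P)$ already implicit in the two preceding theorems and then collapse the resulting sum by a single binomial generating-function identity. The middle equality $\sum_{m}\aleph(P,m)x^m=\sum_{m}\Omega_P(m+1)x^m$ was established in the paragraph immediately before the theorem, so only the right-hand equality needs work.

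I would first fix a linear extension $\mu=\mu_1\mu_2\cdots\mu_p\in\mathcal{L}(P)$ and invoke Stanley's fundamental decomposition of $\pi(P)$: a $P$-partition $\sigma$ is \emph{compatible with $\mu$} if and only if
$$\sigma(\mu_1)\geq\sigma(\mu_2)\geq\cdots\geq\sigma(\mu_p)\geq 0,$$
with a strict inequality $\sigma(\mu_i)>\sigma(\mu_{i+1})$ exactly when $i\in D_\mu$; and $\pi(P)$ is the disjoint union of these compatibility classes over $\mu\in\mathcal{L}(P)$. To count members of such a class taking values in $\{0,1,\ldots,m\}$, I would apply the standard ``descent shift'' $b_i=\sigma(\mu_i)-\#\{j\in D_\mu:j\geq i\}$; since $\#\{j\in D_\mu:j\geq i\}-\#\{j\in D_\mu:j\geq i+1\}=[i\in D_\mu]$, this converts each strict-at-descent inequality into a weak one while preserving non-negativity at the bottom. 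The result is a bijection with integer sequences
$$m-d(\mu)\geq b_1\geq b_2\geq\cdots\geq b_p\geq 0,$$
of which there are $\binom{m-d(\mu)+p}{p}$ (vanishing when $m<d(\mu)$).

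Summing over $m\geq 0$ and interchanging with the sum over $\mu$,
$$\sum_{m\geq 0}\aleph(P,m)x^m=\sum_{\mu\in\mathcal{L}(P)}x^{d(\mu)}\sum_{n\geq 0}\binom{n+p}{p}x^n=\frac{\sum_{\mu\in\mathcal{L}(P)}x^{d(\mu)}}{(1-x)^{p+1}},$$
by the well-known expansion of $(1-x)^{-(p+1)}$. The only step needing care is verifying the descent shift is a genuine bijection between strict-at-descent chains bounded above by $m$ and weakly decreasing sequences bounded above by $m-d(\mu)$; everything else, including the fundamental decomposition and the binomial generating function, is essentially formal.
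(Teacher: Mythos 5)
The paper offers no proof of this statement: it is quoted directly from \cite[Theorem 3.15.8]{Stanley-Vol-1}, so there is no internal argument to compare yours against. Your proof is the standard one --- essentially Stanley's own --- and it is correct. The middle equality is indeed already established in the preceding paragraph via $\aleph(P,m)=\Omega_P(m+1)$. For the right-hand equality, the three ingredients you use all check out: the decomposition of $\pi(P)$ into compatibility classes indexed by $\mathcal{L}(P)$; the descent shift $b_i=\sigma(\mu_i)-\#\{j\in D_{\mu}: j\geq i\}$, which carries the class of $\mu$ restricted to values in $\{0,1,\ldots,m\}$ bijectively onto sequences $m-d(\mu)\geq b_1\geq\cdots\geq b_p\geq 0$, of which there are exactly $\binom{m-d(\mu)+p}{p}$; and the expansion $\sum_{n\geq 0}\binom{n+p}{p}x^n=(1-x)^{-(p+1)}$. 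One wording slip is worth fixing: compatibility with $\mu$ requires $\sigma(\mu_i)>\sigma(\mu_{i+1})$ \emph{whenever} $i\in D_{\mu}$, not ``exactly when'' --- strict drops at non-descent positions must be permitted, or the classes would fail to cover $\pi(P)$. Your shift computation implicitly uses the correct condition, since $b_i\geq b_{i+1}$ is equivalent to $\sigma(\mu_i)-\sigma(\mu_{i+1})\geq 1$ at descents and $\geq 0$ elsewhere, so the count is unaffected. The only substantive input taken on faith is that the compatibility classes genuinely partition the set of all order-reversing maps (Stanley's fundamental lemma); citing that is reasonable given that the theorem itself appears in the paper only as a citation.
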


Finally, we conclude this section by providing two formulas for the number $e(P)$ of linear extensions. They are valuable since Brightwell and Winller \cite{Brightwell91} showed that counting $e(P)$ of a finite poset is $\#\mathbf{P}$-complete.

\begin{thm}{\em \cite[Section 3]{Stanley-Vol-1}}
Let $(P,\preceq)$ be a finite poset on $[p]:=\{1,2,\ldots,p\}$. Then
\begin{align}
e(P)&=p!\cdot \bigg(\Big((1-q)^p\cdot\mathrm{PF}(\pi(P))\Big)\Big|_{q=1}\bigg).\label{linearExtension} \\
e(P)&=p!\cdot \big([m^p]\Omega_{P}(m)\big)=p!\cdot \big([m^p]\aleph(P,m)\big), \label{linearExtension-Second}
\end{align}
where $[m^i]f(m)$ denotes the coefficient of $m^i$ in a polynomial $f(m)$.
\end{thm}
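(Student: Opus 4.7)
Both identities follow directly from results already recorded in Section 2, so the task is essentially one of extracting the correct leading behavior from the two generating functions that precede the theorem. For \eqref{linearExtension}, I would start from the identity
\[\mathrm{PF}(\pi(P))=\frac{\sum_{\mu\in \mathcal{L}(P)} q^{\mathrm{maj}(\mu)}}{(1-q)(1-q^2)\cdots (1-q^p)},\]
factor each denominator factor as $1-q^k=(1-q)(1+q+\cdots+q^{k-1})$, and multiply through by $(1-q)^p$ to cancel all $p$ copies of $1-q$. The resulting rational function
\[(1-q)^p\,\mathrm{PF}(\pi(P))=\frac{\sum_{\mu\in \mathcal{L}(P)} q^{\mathrm{maj}(\mu)}}{\prod_{k=1}^p (1+q+\cdots+q^{k-1})}\]
is regular at $q=1$; setting $q=1$ turns each factor $1+q+\cdots+q^{k-1}$ into $k$, so the denominator becomes $1\cdot 2\cdots p=p!$, while each $q^{\mathrm{maj}(\mu)}$ becomes $1$, so the numerator becomes $|\mathcal{L}(P)|=e(P)$. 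Clearing $p!$ yields \eqref{linearExtension}.

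For \eqref{linearExtension-Second}, the shortest route is to invoke the structural fact recalled just above the theorem: $\Omega_P(m)$ is a polynomial in $m$ of degree $p$ with leading coefficient $e(P)/p!$, which immediately gives $e(P)=p!\,[m^p]\Omega_P(m)$. For the companion statement involving $\aleph(P,m)$, I would use the relation $\aleph(P,m)=\Omega_P(m+1)$: since the substitution $m\mapsto m+1$ affects only the lower-order coefficients of a polynomial, the degree-$p$ coefficient is unchanged, and hence $[m^p]\aleph(P,m)=[m^p]\Omega_P(m)=e(P)/p!$. Alternatively, the leading coefficient can be computed directly from the preceding theorem's identity $\sum_{m\geq 0}\aleph(P,m)x^m=A(x)/(1-x)^{p+1}$, with $A(x)=\sum_{\mu\in \mathcal{L}(P)} x^{d(\mu)}$: expanding $(1-x)^{-(p+1)}=\sum_{m\geq 0}\binom{m+p}{p}x^m$ and reading off the coefficient of $x^m$ presents $\aleph(P,m)$ as a polynomial in $m$ whose top coefficient equals $A(1)/p!=e(P)/p!$.

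I do not anticipate any genuine obstacle. The only mild analytic subtlety is confirming that $(1-q)^p$ really does cancel every singularity of $\mathrm{PF}(\pi(P))$ at $q=1$, which is transparent from the factorization $1-q^k=(1-q)(1+q+\cdots+q^{k-1})$; and the polynomiality claim underlying \eqref{linearExtension-Second} has already been stated in the paragraph preceding the theorem.
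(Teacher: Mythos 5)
The paper does not prove this theorem at all; it is quoted from Stanley's \emph{Enumerative Combinatorics} with only a citation. Your derivation is correct and is the standard one: \eqref{linearExtension} follows from the major-index formula by cancelling the $p$ factors of $1-q$ and evaluating at $q=1$, and \eqref{linearExtension-Second} follows from the stated polynomiality of $\Omega_P(m)$ with leading coefficient $e(P)/p!$ together with $\aleph(P,m)=\Omega_P(m+1)$, so nothing further is needed.
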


\section{An Extension of Two-Rowed Plane Partitions}

Now suppose $P$ is a poset containing the segment $A\to B$ (see the left side of Figure \ref{map1}). Consider the generating function
\begin{align*}
F(x,y;P)=\sum_{\sigma\in \pi(P)} x^{\sigma(A)} y^{\sigma(B)} q^{|\sigma|-\sigma(A)-\sigma(B)}= \sum_{i\geq j\geq 0}a_{ij}x^iy^j.
\end{align*}
This generating function keeps track of $\sigma(A)$ and $\sigma(B)$. In particular,
the coefficient $a_{ij}$ is the weighted counting series (in $q$) for $P$-partitions $\sigma$ of $P$ satisfying $\sigma(A)=i$, $\sigma(B)=j$.
By definition $i=\sigma(A)\geq \sigma(B)=j$. Our main concern here is about $x$ and $y$.

We construct two posets $\widetilde{P}_1$ and $\widetilde{P}_2$ to explain the action of the two operators $\varphi_{z}$ and $\psi_{z}$ defined in \eqref{Varphi-map} and \eqref{Psimap} on $F(x,y;P)$, respectively.

The poset $\widetilde{P}_1$ is obtained from $P$ by adding two new vertices $C$ and $D$, with the arrows $C\to A\to  D \to B$. See Figure \ref{map1}, where
a $P$-partition $\sigma$ of $P$ is extended as $\sigma(A)=i$, $\sigma(B)=j$, $\sigma(C)=m$, $\sigma(D)=\ell$ for $\widetilde{P}_1$. Note that
the original arrow $A\to B$ is omitted due to the transitivity of the partial order.
\begin{figure}[htp]
\centering
\includegraphics[width=12cm,height=4cm]{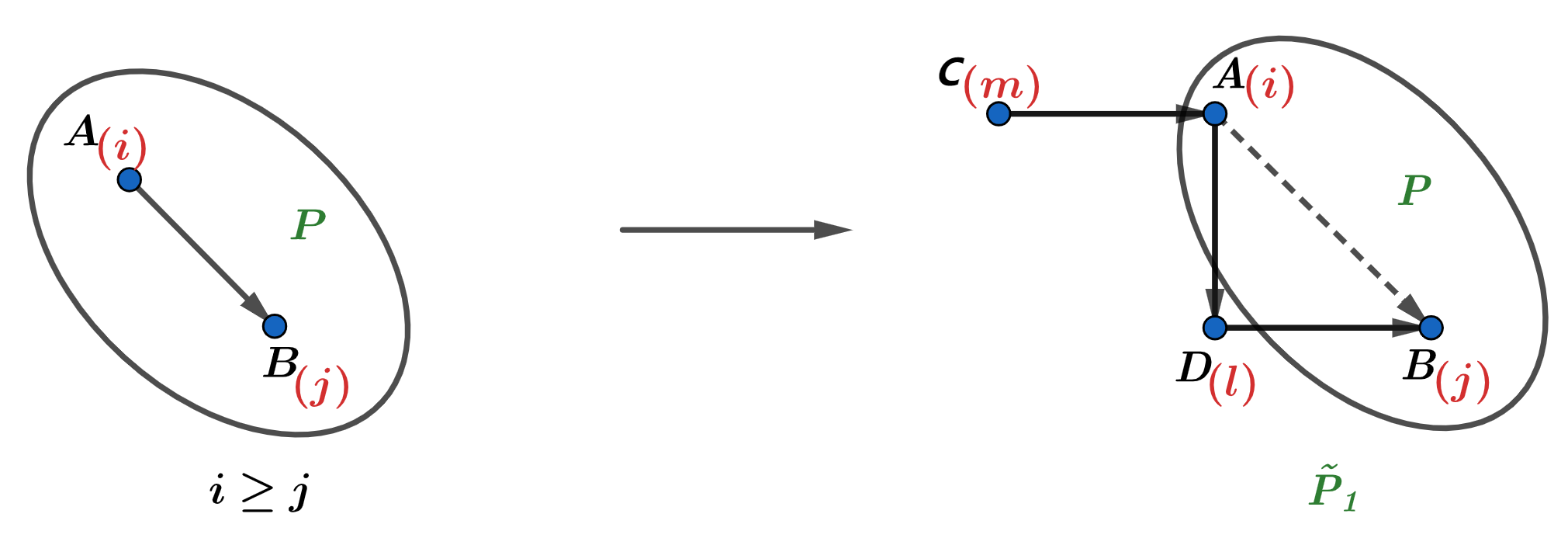}
\caption{The poset $\widetilde{P}_1$.}
\label{map1}
\end{figure}
In this new poset, we keep track of $\sigma(C)$, $\sigma(D)$, and the trace $\sigma(A)+\sigma(B)$ by letting
$$ F(x,y;z; \widetilde{P}_1) = \sum_{\sigma \in \pi(\widetilde{P}_1)} x^{\sigma(C)}y^{\sigma(D)} z^{\sigma(A)+\sigma(B)} q^{|\sigma|-\sigma(C)-\sigma(D)-(\sigma(A)+\sigma(B))}.$$
Then the following lemma gives a combinatorial interpretation of the operator $\varphi_{z}$.
\begin{lem}\label{Map1-Varphi}
Follow the notation as above. We have
$$\varphi_{z}\odot F(x,y;P)=\sum_{m\geq i\geq \ell\geq j\geq 0}a_{ij}x^m y^{\ell} z^{i+j}
=F(x,y;z;\widetilde{P}_1).$$
\end{lem}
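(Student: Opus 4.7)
The plan is to verify the two equalities in the statement separately: first the combinatorial identification of $F(x,y;z;\widetilde{P}_1)$ with the triple sum, and then the algebraic identity relating this triple sum to $\varphi_{z}\odot F(x,y;P)$.

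First I would argue the right-hand equality combinatorially. By construction, an element $\sigma\in\pi(\widetilde{P}_1)$ is precisely an element $\sigma|_P\in\pi(P)$ together with a choice of two new values $m=\sigma(C)$ and $\ell=\sigma(D)$ subject only to the new cover relations $m\geq\sigma(A)=i$ and $i\geq \ell\geq \sigma(B)=j$ (the old relation $A\to B$ being absorbed by transitivity). Since the range for $m$ and $\ell$ depends only on the values $(i,j)$ already tracked by $a_{ij}$, the generating function factorizes as
$$F(x,y;z;\widetilde{P}_1)=\sum_{i\geq j\geq 0}a_{ij}z^{i+j}\bigg(\sum_{m\geq i}x^m\bigg)\bigg(\sum_{\ell=j}^{i}y^\ell\bigg),$$
which after trivial rearrangement yields the stated triple sum $\sum_{m\geq i\geq \ell\geq j\geq 0}a_{ij}x^m y^{\ell}z^{i+j}$.

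For the left-hand equality, I would evaluate the geometric sums: $\sum_{m\geq i}x^m=x^i/(1-x)$ and $\sum_{\ell=j}^{i}y^\ell=(y^j-y^{i+1})/(1-y)$. Substituting and distributing, the triple sum splits into a difference of two double sums in $(i,j)$. The first term collects $a_{ij}x^iy^jz^{i+j}=a_{ij}(xz)^i(yz)^j$, which sums to $F(xz,yz;P)$. The second term contributes $-a_{ij}x^iy^{i+1}z^{i+j}=-y\cdot a_{ij}(xyz)^i z^j$, summing to $-yF(xyz,z;P)$. Dividing by $(1-x)(1-y)$ recovers exactly the right-hand side of the definition \eqref{Varphi-map} of $\varphi_{z}\odot F(x,y;P)$.

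There is no real obstacle: the entire argument is a careful bookkeeping exercise on geometric series, and the two monomial substitutions $(xz,yz)$ and $(xyz,z)$ built into the definition of $\varphi_{z}$ are precisely tailored to match the two pieces produced by expanding the numerator $y^j-y^{i+1}$. The only point worth double-checking is the exponent of $y$ in the second piece, which shifts from $j$ to $i+1$ and is responsible for the prefactor $y$ in $yF(xyz,z;P)$ as well as for the substitution $(x,y)\mapsto(xy,1)\cdot z$ there.
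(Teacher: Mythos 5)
Your proposal is correct and amounts to the same computation as the paper's proof: both hinge on the geometric-series identities $\sum_{m\geq i}x^m=x^i/(1-x)$ and $\sum_{\ell=j}^{i}y^\ell=(y^j-y^{i+1})/(1-y)$, matching the two pieces $F(xz,yz;P)$ and $-yF(xyz,z;P)$ in the definition of $\varphi_z$. The only difference is direction of travel --- the paper expands the closed form of $\varphi_z\odot F$ into the triple sum and then reads off the poset interpretation, whereas you build the triple sum from $\widetilde{P}_1$ and collapse it --- which is an immaterial reordering of the same steps.
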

\begin{proof}
By \eqref{Varphi-map}, we have
\begin{align*}
\varphi_{z}\odot F(x,y;P)& =\frac{\left(\sum_{i\geq j\geq 0}a_{ij}x^iy^jz^{i+j}-\sum_{i\geq j\geq 0}a_{ij}x^iy^{i+1}z^{i+j}\right)}{1-y}\sum_{k\geq 0}x^{k}\\
&=\sum_{i\geq j\geq 0, k\geq 0}a_{ij}x^{i+k}z^{i+j}\sum_{\ell=j}^{i}y^\ell.
\end{align*}
This means that for each pair $i\geq j$, a $P$-partition $\sigma$ of $P$ with $\sigma(A,B)=(i,j)$ generates
$P$-partitions $\sigma'$ of $\widetilde{P}_1$ with $\sigma'(A,B,C,D)=(i,j,i+k,\ell)$ with the desired conditions $i+k\geq i \geq \ell \geq j$.
\end{proof}

Similarly, the poset $\widetilde{P}_2$ is obtained from $P$ by adding the new vertex $E$ with the arrows $A\to  E \to B$. See Figure \ref{map2},
where
a $P$-partition $\sigma$ of $P$ is extended as $\sigma(A)=i$, $\sigma(B)=j$, $\sigma(E)=\ell$ for $\widetilde{P}_2$. Again
the original arrow $A\to B$ is omitted similarly.
\begin{figure}[htp]
\centering
\includegraphics[width=12cm,height=4cm]{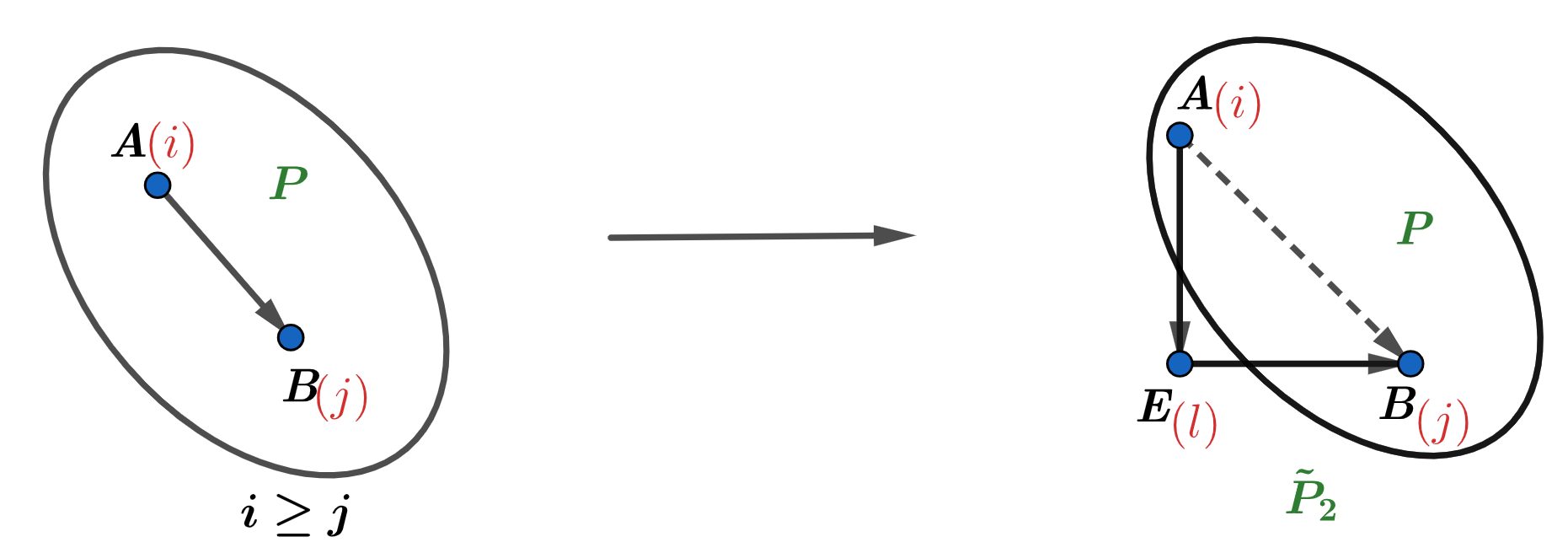}
\caption{The poset $\widetilde{P}_2$.}
\label{map2}
\end{figure}

In the poset $\widetilde{P}_2$, we keep track of $\sigma(E)$ and the trace $\sigma(A)+\sigma(B)$ by letting
$$F(x;z; \widetilde{P}_2) = \sum_{\sigma \in \pi(\widetilde{P}_2)} x^{\sigma(E)}z^{\sigma(A)+\sigma(B)} q^{|\sigma|-\sigma(E)-(\sigma(A)+\sigma(B))}.$$
Similar to Lemma \ref{Map1-Varphi}, we can obtain the following results.
\begin{lem}\label{Map2-Psi}
Follow the notation as above. We have
$$\psi_{z}\odot F(x,y;P)=\sum_{i\geq \ell\geq j\geq 0}a_{ij}x^{\ell} z^{i+j}
=F(x;z; \widetilde{P}_2).$$
\end{lem}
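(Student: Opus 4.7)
The proof will parallel the proof of Lemma \ref{Map1-Varphi}, but with the simpler operator $\psi_{z}$: I plan to substitute the expansion $F(x,y;P) = \sum_{i \geq j \geq 0} a_{ij} x^{i} y^{j}$ directly into definition \eqref{Psimap}, simplify using a geometric series, and then read off the resulting sum as the desired generating function over $\widetilde{P}_{2}$.

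First I would compute the two substituted series
\begin{align*}
F(z, xz; P) &= \sum_{i \geq j \geq 0} a_{ij}\, z^{i} (xz)^{j} = \sum_{i \geq j \geq 0} a_{ij}\, x^{j} z^{i+j}, \\
F(xz, z; P) &= \sum_{i \geq j \geq 0} a_{ij}\, (xz)^{i} z^{j} = \sum_{i \geq j \geq 0} a_{ij}\, x^{i} z^{i+j}.
\end{align*}
Forming the numerator in \eqref{Psimap} gives
\[
F(z, xz; P) - x F(xz, z; P) = \sum_{i \geq j \geq 0} a_{ij}\, z^{i+j} \bigl(x^{j} - x^{i+1}\bigr).
\]
Since $i \geq j$, the factor $x^{j} - x^{i+1}$ is divisible by $1-x$, and
\[
\frac{x^{j} - x^{i+1}}{1 - x} = x^{j} \cdot \frac{1 - x^{i-j+1}}{1-x} = \sum_{\ell = j}^{i} x^{\ell}.
\]
Therefore
\[
\psi_{z} \odot F(x, y; P) = \sum_{i \geq j \geq 0} a_{ij}\, z^{i+j} \sum_{\ell = j}^{i} x^{\ell} = \sum_{i \geq \ell \geq j \geq 0} a_{ij}\, x^{\ell} z^{i+j},
\]
which is the first claimed equality.

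For the combinatorial identification with $F(x; z; \widetilde{P}_{2})$, the key observation is that extending a $P$-partition $\sigma$ of $P$ with $\sigma(A) = i$, $\sigma(B) = j$ to a $\widetilde{P}_{2}$-partition requires only choosing a value $\sigma(E) = \ell$ compatible with the covering relations $A \to E \to B$, i.e., satisfying precisely $i \geq \ell \geq j$. Each such extension contributes $x^{\ell}$ for tracking $\sigma(E)$, $z^{i+j}$ for the trace $\sigma(A) + \sigma(B)$, and a $q$-weight absorbed into $a_{ij}$ for the remaining part of $|\sigma|$. Summing over all admissible $(i, j, \ell)$ yields exactly the triple sum above, matching $F(x; z; \widetilde{P}_{2})$. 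No step here poses a real obstacle; the only minor care needed is verifying the range of the geometric sum when $i = j$, where the range $\ell \in \{j, \ldots, i\}$ degenerates to the single term $\ell = i = j$, consistent with the factorization $x^{j} - x^{i+1} = x^{j}(1-x)$ in that case.
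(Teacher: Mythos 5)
Your proof is correct and follows exactly the route the paper intends: the paper omits the proof of this lemma, stating only that it is "similar to Lemma \ref{Map1-Varphi}," and your computation (substituting the series into \eqref{Psimap}, dividing $x^{j}-x^{i+1}$ by $1-x$ to get the geometric sum $\sum_{\ell=j}^{i}x^{\ell}$, and reading off the constraint $i\geq\ell\geq j$ as the condition on $\sigma(E)$ in $\widetilde{P}_{2}$) is precisely the analogue of the paper's proof of Lemma \ref{Map1-Varphi}. No issues.
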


Our main result, Theorem \ref{FullTwoPlane}, is derived by composing these operators. 
This result enables us to obtain many generating functions associated with two-rowed plane partitions.
For the reader's convenience, we restate the theorem as follows. 
\begin{thm}[also Theorem \ref{FullTwoPlane}]
The operators $\varphi_{z}$, $\psi_{z}$ are defined in \eqref{Varphi-map} and \eqref{Psimap}.
Let $F(x,y;P)=\sum_{i\geq j\geq 0}a_{ij}x^iy^j$. Let $n\geq 2$. Then we have
\begin{align*}
&\psi_{z_n}\odot \varphi_{z_{n-1}}\odot \varphi_{z_{n-2}}\odot \cdots\odot \varphi_{z_1}\odot F(x,y;P)
\\=&\frac{F(z_1 \cdots z_n, xz_1\cdots z_n;P)-xF(xz_1\cdots z_n,z_1\cdots z_n;P)}{(1-x)(1-xz_n)(1-xz_{n-1} z_n)\cdots(1-xz_2\cdots z_n)(1-z_n)(1-z_{n-1}z_n)\cdots(1-z_2\cdots z_n)}.
\end{align*}
\end{thm}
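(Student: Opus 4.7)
The plan is to prove the identity by induction on $n$, with base case $n=2$ verified by direct computation and inductive step carried out by peeling off the innermost operator $\varphi_{z_1}$. Because $\varphi_z$ and $\psi_z$ are $K$-linear, I manipulate $F(x,y;P)$ as an opaque bivariate series throughout; no expansion into monomials is required.

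For the base case $n=2$, I set $\tilde F(x,y) := \varphi_{z_1}\odot F = [F(xz_1,yz_1;P) - yF(xyz_1,z_1;P)]/[(1-x)(1-y)]$ from \eqref{Varphi-map} and substitute into $\psi_{z_2}\odot\tilde F$ via \eqref{Psimap}. In the numerator $\tilde F(z_2,xz_2) - x\tilde F(xz_2,z_2)$, the two copies of $F(xz_1z_2^2, z_1;P)$ appear with coefficients $xz_2$ and $xz_2$ respectively, so they cancel exactly, leaving the asserted numerator $F(z_1z_2, xz_1z_2;P) - xF(xz_1z_2, z_1z_2;P)$ over the denominator $(1-x)(1-xz_2)(1-z_2)$.

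For the inductive step with $n\ge 3$, I again set $\tilde F = \varphi_{z_1}\odot F$ and note, using Lemma \ref{Map1-Varphi}, that $\tilde F(x,y)$ is again a series of the form $\sum \tilde a_{m\ell}x^m y^\ell$ with $m\ge\ell\ge 0$, so the inductive hypothesis may be applied. Invoking Theorem \ref{FullTwoPlane} for $n-1$ with the shifted variable list $(z_2,\dots,z_n)$ in place of $(z_1,\dots,z_{n-1})$ gives
\[
\psi_{z_n}\odot\varphi_{z_{n-1}}\odot\cdots\odot\varphi_{z_2}\odot\tilde F(x,y) = \frac{\tilde F(Z, xZ)-x\tilde F(xZ, Z)}{(1-x)\prod_{k=3}^{n}(1-xZ_k)(1-Z_k)},
\]
where $Z_k := z_kz_{k+1}\cdots z_n$ and $Z := Z_2$. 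Substituting the closed form of $\tilde F$, the two occurrences of $F(xz_1Z^2, z_1;P)$ in the numerator cancel as in the base case, and the factor $(1-Z)(1-xZ) = (1-Z_2)(1-xZ_2)$ produced by the denominator of $\tilde F$ supplies the missing $k=2$ terms that extend the product to $\prod_{k=2}^{n}(1-xZ_k)(1-Z_k)$, giving the right-hand side.

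The main obstacle, although not deep, is choosing the correct peeling direction. Peeling $\psi_{z_n}$ from the outside, or $\varphi_{z_{n-1}}$, would each require a closed formula for a partial composition that the theorem does not directly supply. Only peeling $\varphi_{z_1}$ works, because the remaining composition $\psi_{z_n}\odot\varphi_{z_{n-1}}\odot\cdots\odot\varphi_{z_2}$ is literally the statement of Theorem \ref{FullTwoPlane} for $n-1$ in shifted variables, and moreover $\varphi_{z_1}\odot F$ preserves the bidegree structure $i\ge j\ge 0$ required for the hypothesis to apply. Once this peeling is chosen, the cross-term cancellation is forced by the antisymmetric shape of the numerators defining $\varphi_z$ and $\psi_z$.
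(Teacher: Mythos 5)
Your proposal is correct and follows essentially the same route as the paper's own proof: induction on $n$ with the base case $n=2$ checked directly, and the inductive step carried out by peeling off the innermost $\varphi_{z_1}$, applying the $(n-1)$-case in the shifted variables $(z_2,\dots,z_n)$, and observing the cancellation of the cross terms $F(xz_1z_2^2\cdots z_n^2,z_1;P)$. Your explicit appeal to Lemma \ref{Map1-Varphi} to confirm that $\varphi_{z_1}\odot F$ again has the form $\sum_{m\geq\ell\geq 0}\tilde a_{m\ell}x^my^{\ell}$ is a small but welcome addition of rigor that the paper leaves implicit.
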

\begin{proof}
We prove the theorem by induction on $n$. By \eqref{Varphi-map} and \eqref{Psimap}, we know that
\begin{small}
\begin{align*}
\varphi_{z}\odot F(x,y;P)=\frac{F(xz,yz;P)-yF(xyz,z;P)}{(1-x)(1-y)}, \ \ \
\psi_{z}\odot F(x,y;P)=\frac{F(z,xz;P)-xF(xz,z;P)}{1-x}.
\end{align*}
\end{small}
In the base case when $n=2$, we have
\begin{align*}
&\psi_{z_2}\odot \varphi_{z_1}\odot  F(x,y;P)
\\=&\psi_{z_2}\odot \frac{F(xz_1,yz_1;P)-yF(xyz_1,z_1;P)}{(1-x)(1-y)}
\\=&\frac{1}{1-x} \left(\frac{F(z_1z_2,xz_1z_2;P)-xz_2F(xz_1z_2^2,z_1;P)}{(1-z_2)(1-xz_2)}-x\cdot\frac{F(xz_1z_2,z_1z_2;P)-z_2F(xz_1z_2^2,z_1;P)}{(1-z_2)(1-xz_2)}\right)
\\=&\frac{F(z_1z_2,xz_1z_2;P)-xF(xz_1z_2,z_1z_2;P)}{(1-x)(1-z_2)(1-xz_2)}.
\end{align*}
Assuming the theorem holds for $n-1$. By simple variable substitution, we obtain
\begin{align*}
& \psi_{z_n}\odot \varphi_{z_{n-1}}\odot \cdots\odot \varphi_{z_2}\odot F(x,y;P)
\\=& \frac{F(z_2 \cdots z_n, xz_2\cdots z_n;P)-xF(xz_2\cdots z_n,z_2\cdots z_n;P)}{(1-x)(1-xz_n)(1-xz_{n-1} z_n)\cdots(1-xz_3\cdots z_n)(1-z_n)(1-z_{n-1}z_n)\cdots(1-xz_3\cdots z_n)}.
\end{align*}
Therefore, we get
\begin{align*}
&\psi_{z_n}\odot \varphi_{z_{n-1}}\odot \cdots\odot \varphi_{z_1} \odot F(x,y;P)
\\=&\psi_{z_n}\odot \varphi_{z_{n-1}}\odot \cdots\odot \varphi_{z_2} \odot \frac{F(xz_1,yz_1;P)-yF(xyz_1,z_1;P)}{(1-x)(1-y)}
\\=&\frac{\frac{F(z_1 \cdots z_n, xz_1\cdots z_n;P)-xz_2\cdots z_nF(xz_1z_2^2\cdots z_n^2,z_1;P)}{(1-xz_2\cdots z_n)(1-z_2\cdots z_n)}-x\cdot\frac{F(xz_1 \cdots z_n, z_1\cdots z_n;P)-z_2\cdots z_nF(xz_1z_2^2\cdots z_n^2,z_1;P)}{(1-xz_2\cdots z_n)(1-z_2\cdots z_n)}}{(1-x)(1-xz_n)(1-xz_{n-1} z_n)\cdots(1-xz_3\cdots z_n)(1-z_n)(1-z_{n-1}z_n)\cdots(1-z_3\cdots z_n)}
\\=&\frac{F(z_1 \cdots z_n, xz_1\cdots z_n;P)-x\cdot F(xz_1\cdots z_n,z_1\cdots z_n;P)}{(1-x)(1-xz_n)(1-xz_{n-1} z_n)\cdots(1-xz_2\cdots z_n)(1-z_n)(1-z_{n-1}z_n)\cdots(1-z_2\cdots z_n)}.
\end{align*}
This completes the proof.
\end{proof}

We consider the combinatorial explanation of $\psi_{z_n}\odot \varphi_{z_{n-1}}\odot \varphi_{z_{n-2}}\odot \cdots\odot \varphi_{z_1}\odot F(x,y;P)$.
Based on the posets $\widetilde{P}_1$ and $\widetilde{P}_2$ in Figures \ref{map1} and \ref{map2}, we can construct the poset $\overline{P}$ in Figure \ref{FullGF}.
\begin{figure}[htp]
\centering
\includegraphics[width=16cm,height=4cm]{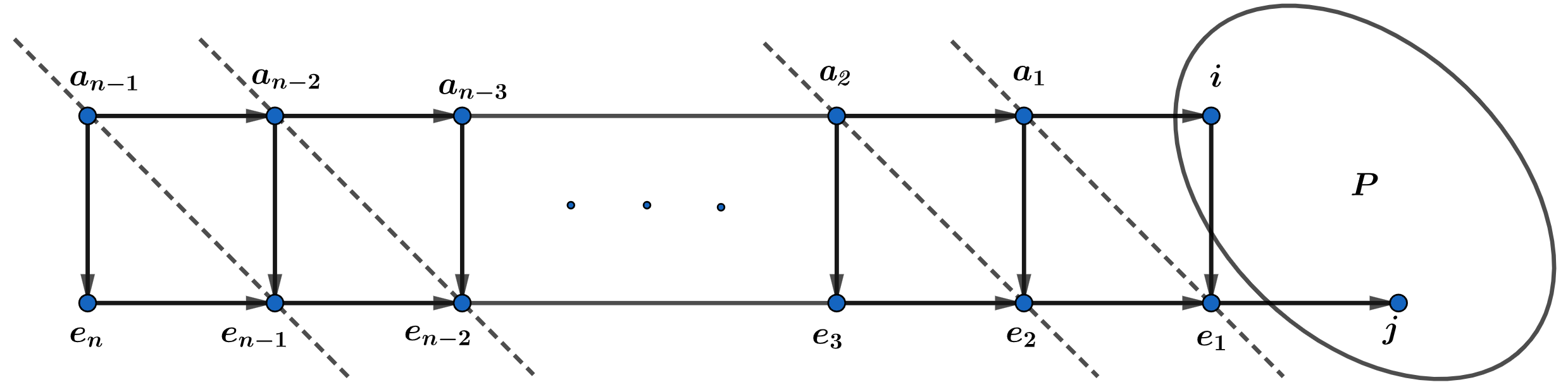}
\caption{The poset $\overline{P}$.}
\label{FullGF}
\end{figure}

The partial conditions of the $P$-partition of $\overline{P}$ can be described as:
We label the vertices by nonnegative integers in poset $\overline{P}$ (see Figure \ref{FullGF}) such that the following conditions (\text{Conditions\ } \textbf{P}) hold:
\begin{align*}
\text{Conditions\ } \textbf{P}=\left\{
             \begin{array}{ll}
             a_{n-1}\geq a_{n-2}\geq \cdots \geq a_1\geq i\geq 0; \\
             e_{n}\geq e_{n-1}\geq \cdots \geq e_1\geq j\geq 0; \\
             a_k\geq e_{k+1}\ \ \text{for}\ \ 1\leq k\leq n-1;\\
             i\geq e_1.
             \end{array}
\right.
\end{align*}

We consider $\psi_{z_n}\odot \varphi_{z_{n-1}}\odot \varphi_{z_{n-2}}\odot \cdots\odot \varphi_{z_1}\odot F(x,y;P)$ as a generating function related to $\overline{P}$.
\begin{thm}\label{TwoPlaneQq}
Follow the notation as above. We have
\begin{align*}
&\psi_{z_n}\odot \varphi_{z_{n-1}}\odot \varphi_{z_{n-2}}\odot \cdots\odot \varphi_{z_1}\odot F(x,y;P)
\\=&\sum_{\text{Conditions\ } \textbf{P}}a_{ij}x^{e_n}z_n^{a_{n-1}+e_{n-1}}z_{n-1}^{a_{n-2}+e_{n-2}}\cdots z_2^{a_1+e_1}z_1^{i+j}.
\end{align*}
In particular, we have
$$\psi_{z_n}\odot  \varphi_{z_{n-1}}\odot \varphi_{z_{n-2}}\odot \cdots\odot \varphi_{z_1}\odot F(x,y;P)\big|_{x=z_1=\cdots=z_n=q}
=\sum_{\sigma \in \pi(\overline{P})}q^{|\sigma|}.$$
\end{thm}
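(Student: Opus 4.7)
The plan is to prove the first identity by iterating the combinatorial interpretations of $\varphi_z$ and $\psi_z$ provided by Lemmas \ref{Map1-Varphi} and \ref{Map2-Psi}, and then to read off the $q$-specialization by a direct exponent count. The enabling observation is that the output of $\varphi_{z_k}$ is again a bivariate generating function of the shape $\sum_{m \geq \ell} b_{m\ell}\, x^m y^\ell$ (now with $b_{m\ell}$ lying in a polynomial ring over $K$ in $q, z_1, \ldots, z_k$), so $\varphi_{z_{k+1}}$ (or eventually $\psi_{z_n}$) can be applied to it in the same formal manner, with $m, \ell$ taking the roles of the ``top two'' vertices $A, B$ in the input.

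I would introduce the intermediate generating function
\begin{align*}
G_k(x,y) := \varphi_{z_k} \odot \varphi_{z_{k-1}} \odot \cdots \odot \varphi_{z_1} \odot F(x,y;P), \qquad 1 \leq k \leq n-1,
\end{align*}
and prove by induction on $k$ that
\begin{align*}
G_k(x,y) = \sum_{\mathbf{C}_k} a_{ij}\, x^{a_k} y^{e_k} z_k^{a_{k-1}+e_{k-1}} z_{k-1}^{a_{k-2}+e_{k-2}} \cdots z_2^{a_1+e_1} z_1^{i+j},
\end{align*}
where $\mathbf{C}_k$ denotes the restriction of Conditions $\mathbf{P}$ to indices $\leq k$, namely $a_k \geq a_{k-1} \geq \cdots \geq a_1 \geq i \geq e_1$, $e_k \geq e_{k-1} \geq \cdots \geq e_1 \geq j$, and $a_m \geq e_{m+1}$ for $1 \leq m \leq k-1$. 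The base case $k=1$ is exactly Lemma \ref{Map1-Varphi} after renaming $m \to a_1, \ell \to e_1$. For the inductive step, I would rewrite $G_{k-1}(x,y) = \sum_{a_{k-1} \geq e_{k-1}} b_{a_{k-1}, e_{k-1}}\, x^{a_{k-1}} y^{e_{k-1}}$ by the inductive hypothesis and apply Lemma \ref{Map1-Varphi} to $\varphi_{z_k} \odot G_{k-1}$: this inserts a new pair of vertices $a_k \geq a_{k-1}$ above and $a_{k-1} \geq e_k \geq e_{k-1}$ in between, multiplied by the weight $z_k^{a_{k-1}+e_{k-1}}$, and so extends $\mathbf{C}_{k-1}$ to $\mathbf{C}_k$ exactly as required. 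Finally, applying $\psi_{z_n}$ to $G_{n-1}$ via Lemma \ref{Map2-Psi} inserts a single vertex $e_n$ with $a_{n-1} \geq e_n \geq e_{n-1}$ and replaces $x^{a_{n-1}} y^{e_{n-1}}$ by $x^{e_n} z_n^{a_{n-1}+e_{n-1}}$, completing the full Conditions $\mathbf{P}$ and producing the asserted monomial expansion.

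For the specialization $x = z_1 = \cdots = z_n = q$, the weight on each term becomes $q$ raised to $e_n + \sum_{k=1}^{n-1}(a_k + e_k) + (i+j)$, while $a_{ij}$ by definition of $F(x,y;P)$ already carries the factor $q^{|\sigma|_P - i - j}$; multiplying gives $q^{|\sigma|_P + \sum_{k=1}^{n-1} a_k + \sum_{k=1}^{n} e_k} = q^{|\sigma|_{\overline{P}}}$, so summing over all valid labelings yields $\sum_{\sigma \in \pi(\overline{P})} q^{|\sigma|}$. The only real obstacle is the bookkeeping: at each stage of the iteration one must keep straight which pair of vertices currently plays the role of $(A, B)$ tracked by $(x, y)$, and check that the auxiliary $z$- and $q$-weights previously absorbed into the coefficients $b_{a_{k-1}, e_{k-1}}$ pass transparently through the next application of the operator. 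Once that is set up, no algebraic manipulation beyond Lemmas \ref{Map1-Varphi} and \ref{Map2-Psi} (and hence no reuse of the closed-form proof of Theorem \ref{FullTwoPlane}) is needed.
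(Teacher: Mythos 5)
Your proposal is correct and follows essentially the same route as the paper, whose entire proof is the remark that the result is immediate from Lemmas \ref{Map1-Varphi} and \ref{Map2-Psi}; you simply supply the induction on the intermediate functions $G_k$ and the exponent bookkeeping that the paper leaves implicit. No discrepancy to report.
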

\begin{proof}
By Lemma \ref{Map1-Varphi} and \ref{Map2-Psi}, it is now obvious that the theorem holds.
\end{proof}

In fact, the $P$-partition of the poset $\overline{P}$ is an extension of the two-rowed plane partition.
For an extended plane partition $\Pi \in\pi(\overline{P})$, we have the
$\mathrm{tr}_{-1}(\Pi)=e_n$, $\mathrm{tr}_{k}(\Pi)=a_{n-1-k}+e_{n-1-k}$ for $0\leq k\leq n-2$.
We will gain a deeper understanding of this fact in the following corollaries.

Before we proceed again, let's first introduce a notation.
The \emph{$q$-shifted factorial} is defined as
$$(a;q)_0=1;\ \ \ \ (a;q)_n=(1-a)(1-aq)\cdots(1-aq^{n-1}),$$
where $n$ is any positive integer.

\begin{cor}\label{Main-Corollary2}
Follow the notation in Theorems \ref{FullTwoPlane} and \ref{TwoPlaneQq}.
If $z_1=z_2=\cdots =z_n=q$, then we have
\begin{align}
\mathrm{TF}(\pi(\overline{P})):&=\psi_{z_n}\odot \varphi_{z_{n-1}}\odot \varphi_{z_{n-2}}\odot \cdots\odot \varphi_{z_1}\odot F(x,y;P)\big|_{z_1=\cdots=z_n=q}\nonumber
\\&=\frac{F(q^n,xq^{n};P)-xF(xq^{n},q^n;P)}{(x;q)_n (q;q)_{n-1}}.\label{FormuTF}
\end{align}
If $x=z_1=z_2=\cdots =z_n=q$, then the enumerative generating function of the set $\pi(\overline{P})$ is given by
\begin{align}
\mathrm{PF}(\pi(\overline{P}))&=\psi_{z_n}\odot \varphi_{z_{n-1}}\odot \varphi_{z_{n-2}}\odot \cdots\odot \varphi_{z_1}\odot F(x,y;P)\big|_{x=z_1=\cdots=z_n=q}\nonumber
\\&=\frac{F(q^n,q^{n+1};P)-qF(q^{n+1},q^n;P)}{(q;q)_n (q;q)_{n-1}}.\label{FormuPF}
\end{align}
\end{cor}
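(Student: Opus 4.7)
The proof is essentially a direct specialization of Theorem \ref{FullTwoPlane}, combined with the combinatorial identification given in Theorem \ref{TwoPlaneQq}. The plan is to substitute $z_1 = z_2 = \cdots = z_n = q$ into the explicit formula of Theorem \ref{FullTwoPlane}, simplify the denominator using the $q$-shifted factorial notation, and then specialize further $x = q$ to obtain the enumerative form.

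First I would rewrite Theorem \ref{FullTwoPlane} with the substitution $z_1 = \cdots = z_n = q$. The product $z_1 \cdots z_n$ becomes $q^n$, so the numerator reduces to $F(q^n, xq^n; P) - xF(xq^n, q^n; P)$. For the denominator, I would group the factors into two families: the factors of the form $(1 - xz_k z_{k+1} \cdots z_n)$ for $k = n, n-1, \dots, 2$ together with the leading $(1-x)$ become $(1-x)(1-xq)(1-xq^2)\cdots(1-xq^{n-1})$, which by definition is $(x;q)_n$; similarly the factors $(1 - z_k z_{k+1} \cdots z_n)$ for $k = n, n-1, \dots, 2$ collapse to $(1-q)(1-q^2)\cdots(1-q^{n-1}) = (q;q)_{n-1}$. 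This proves \eqref{FormuTF}.

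For \eqref{FormuPF}, I would set $x = q$ in the formula just derived. The numerator becomes $F(q^n, q \cdot q^n; P) - q F(q \cdot q^n, q^n; P) = F(q^n, q^{n+1}; P) - q F(q^{n+1}, q^n; P)$. The denominator gains one more factor: $(x;q)_n|_{x=q} = (q;q)_n$, and the second family is unchanged, giving $(q;q)_n (q;q)_{n-1}$. To justify that this value equals $\mathrm{PF}(\pi(\overline{P})) = \sum_{\sigma \in \pi(\overline{P})} q^{|\sigma|}$, I would quote the second statement of Theorem \ref{TwoPlaneQq}, which asserts precisely that the iterated operator, evaluated at $x = z_1 = \cdots = z_n = q$, is the enumerative generating function for $\pi(\overline{P})$.

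Since every step is a clean algebraic specialization, there is essentially no obstacle; the only mild bookkeeping issue is matching the factors in the product denominator of Theorem \ref{FullTwoPlane} with the definition $(a;q)_n = \prod_{k=0}^{n-1}(1-aq^k)$, which I would handle by explicitly listing the exponents of $q$ appearing in each group and observing they form the ranges $\{0,1,\dots,n-1\}$ and $\{1,2,\dots,n-1\}$ respectively.
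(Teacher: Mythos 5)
Your proposal is correct and matches the paper's (implicit) argument exactly: Corollary \ref{Main-Corollary2} is stated in the paper as an immediate specialization of Theorem \ref{FullTwoPlane}, with the denominator factors collapsing to $(x;q)_n(q;q)_{n-1}$ under $z_1=\cdots=z_n=q$ and the combinatorial meaning of the $x=q$ case supplied by Theorem \ref{TwoPlaneQq}, which is precisely the route you take. Your explicit bookkeeping of the exponent ranges $\{0,\dots,n-1\}$ and $\{1,\dots,n-1\}$ is a correct and slightly more detailed rendering of the same computation.
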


\section{Skew Plane Partitions}

Let $\lambda$ and $\mu$ be two partitions where $\mu\subseteq \lambda$ (i.e., $\mu_i\leq \lambda_i$ for all $i$).
Define the \emph{length} $\ell(\lambda)$ as the number of non-zero parts.
Define a \emph{skew plane partition} of skew shape $\lambda/\mu$ to be an array $T=(T_{ij})$ of nonnegative integers of shape $\lambda/\mu$ (i.e., $1\leq i\leq \ell(\lambda)$, $\mu_i <j\leq \lambda_i$) that is weakly decreasing in both rows and columns.
For instance, the following array
\begin{align*}
\begin{matrix}
& & & 7& 4 \\
& & 6& 5& 5\\
& & 6& 3& \\
&9 & 0& & \\
\end{matrix}
\end{align*}
represents a skew plane partition of the skew shape \(\lambda/\mu = (4,4,3,2)/(2,1,1)\).

Skew plane partitions can be regarded as \(P\)-partitions of a poset, denoted as \(P_{\lambda/\mu}\). Let \(m \in \mathbb{P}\). In Section 2, \(\aleph(P,m)\) denotes the number of \(P\)-partitions  \(P \rightarrow \mathbf{m} \uplus \{0\}\).
Now we set $m\in \mathbb{N}$ without leading to ambiguity.
Therefore, $\aleph(P_{\lambda/\mu},m)$ is the number of skew plane partitions of shape $\lambda/\mu$ such that the largest part is less than or equal to $m$.

A determinant formula for $\aleph(P_{\lambda/\mu},m)$ was first given by Kreweras \cite[Section 2.3.7]{G.Kreweras} (also see \cite[Section 3; Exercise 149]{Stanley-Vol-1}), which is expressed as follows:
\begin{align}\label{P-determinant}
\aleph(P_{\lambda/\mu},m)=\det_{1\leq i,j\leq k}\left(\binom{m+\lambda_i-\mu_j}{m+i-j}\right),
\end{align}
where $k=\ell(\lambda)$, and $\mu_j=0$ for $\ell(\mu)<j\leq \ell(\lambda)$. In the specific case where \(\lambda/\mu = (M-d, M-2d, \ldots, M-kd)/\emptyset\), the determinant can be explicitly computed, as detailed in \cite[Exercise 7.101(b)]{RP.Stanley2024}.

To ensure completeness, we provide a proof for the following proposition:
\begin{prop}
The number of linear extensions of the poset \(P_{\lambda/\mu}\) is given by the formula:
\begin{align}\label{Linear-Exten-Skew-Deter}
e(P_{\lambda/\mu})=(|\lambda|-|\mu|)!\cdot \det_{1\leq i,j\leq k}\left(\frac{1}{(\lambda_i-\mu_j-i+j)!}\right),
\end{align}
where $k=\ell(\lambda)$, and $\mu_j=0$ for $\ell(\mu)<j\leq \ell(\lambda)$.
\end{prop}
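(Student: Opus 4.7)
The plan is to derive the formula by combining the Kreweras determinant \eqref{P-determinant} for $\aleph(P_{\lambda/\mu},m)$ with identity \eqref{linearExtension-Second}, which gives $e(P) = p!\cdot[m^p]\aleph(P,m)$ with $p=|\lambda|-|\mu|$. So the problem reduces to extracting the top-degree coefficient (in $m$) of the determinant
$$\aleph(P_{\lambda/\mu},m)=\det_{1\leq i,j\leq k}\left(\binom{m+\lambda_i-\mu_j}{m+i-j}\right).$$

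The first step is to rewrite each entry in a form that exposes its polynomial degree in $m$. Using the symmetry $\binom{n}{k}=\binom{n}{n-k}$, we have
$$\binom{m+\lambda_i-\mu_j}{m+i-j}=\binom{m+\lambda_i-\mu_j}{\lambda_i-\mu_j-i+j},$$
which is a polynomial in $m$ of degree $a_{ij}:=\lambda_i-\mu_j-i+j$ with leading coefficient $1/a_{ij}!$ (adopting the standard convention that $1/a_{ij}!=0$ when $a_{ij}<0$, which is consistent with the vanishing of the corresponding binomial).

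The second step is to expand the determinant as a signed sum over permutations $\sigma\in S_k$:
$$\aleph(P_{\lambda/\mu},m)=\sum_{\sigma\in S_k}\mathrm{sgn}(\sigma)\prod_{i=1}^k\binom{m+\lambda_i-\mu_{\sigma(i)}}{\lambda_i-\mu_{\sigma(i)}-i+\sigma(i)}.$$
For each $\sigma$, the product has total degree in $m$ equal to
$$\sum_{i=1}^k (\lambda_i-\mu_{\sigma(i)}-i+\sigma(i))=|\lambda|-|\mu|=p,$$
since $\sum_i\sigma(i)=\sum_i i$. Thus every term in the expansion has degree exactly $p$, so none of them contribute cancelling lower-order terms at the top, and the coefficient of $m^p$ in each product is simply the product of the leading coefficients of the factors, namely $\prod_{i}1/(\lambda_i-\mu_{\sigma(i)}-i+\sigma(i))!$.

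The third step is to collect these leading coefficients back into a determinant. We obtain
$$[m^p]\aleph(P_{\lambda/\mu},m)=\sum_{\sigma\in S_k}\mathrm{sgn}(\sigma)\prod_{i=1}^k\frac{1}{(\lambda_i-\mu_{\sigma(i)}-i+\sigma(i))!}=\det_{1\leq i,j\leq k}\left(\frac{1}{(\lambda_i-\mu_j-i+j)!}\right),$$
and multiplying by $p!=(|\lambda|-|\mu|)!$ via \eqref{linearExtension-Second} yields the claimed identity \eqref{Linear-Exten-Skew-Deter}. The only delicate point is the degree bookkeeping in the second step; once it is verified that each permutation's product has degree exactly $p$ (never higher, never lower modulo the $1/k!=0$ convention), the leading-coefficient extraction is immediate. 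This is the main obstacle but is handled by the telescoping identity $\sum_i(\sigma(i)-i)=0$.
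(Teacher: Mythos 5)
Your proposal is correct and follows essentially the same route as the paper: apply \eqref{linearExtension-Second}, rewrite each entry of the Kreweras determinant \eqref{P-determinant} via the symmetry $\binom{n}{k}=\binom{n}{n-k}$, and extract the coefficient of $m^{|\lambda|-|\mu|}$. The paper states the leading-coefficient extraction without justification, whereas you supply the permutation-expansion and degree bookkeeping (each term has degree exactly $\sum_i(\lambda_i-\mu_{\sigma(i)}-i+\sigma(i))=|\lambda|-|\mu|$), which is a welcome but not substantively different elaboration.
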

\begin{proof}
By \eqref{linearExtension-Second} and \eqref{P-determinant}, we have
\begin{align*}
e(P_{\lambda/\mu})&=(|\lambda|-|\mu|)!\cdot \Big([m^{|\lambda|-|\mu|}]\aleph(P_{\lambda/\mu},m)\Big)
\\&=(|\lambda|-|\mu|)!\cdot [m^{|\lambda|-|\mu|}] \det_{1\leq i,j\leq k}\left(\binom{m+\lambda_i-\mu_j}{\lambda_i-\mu_j-i+j}\right)
\\&=(|\lambda|-|\mu|)!\cdot \det_{1\leq i,j\leq k}\left(\frac{1}{(\lambda_i-\mu_j-i+j)!}\right).
\end{align*}
This completes the proof.
\end{proof}

In fact, Equation \eqref{Linear-Exten-Skew-Deter} is also the formula to count the number of standard Young tableaux of shape $\lambda/\mu$ \cite[Corollary 7.16.3]{RP.Stanley2024}.

In this section, we utilize Corollary \ref{Main-Corollary2} to calculate the enumerative generating functions $\mathrm{PF}(\pi(P_{\lambda/\mu}))$ for certain skew plane partitions $P_{\lambda/\mu}$.
Furthermore, we employ Equation \eqref{linearExtension} to obtain their linear extensions.
This is consistent with the formula derived from Equation \eqref{Linear-Exten-Skew-Deter}.

\begin{figure}[htp]
\centering
\includegraphics[width=15cm,height=3cm]{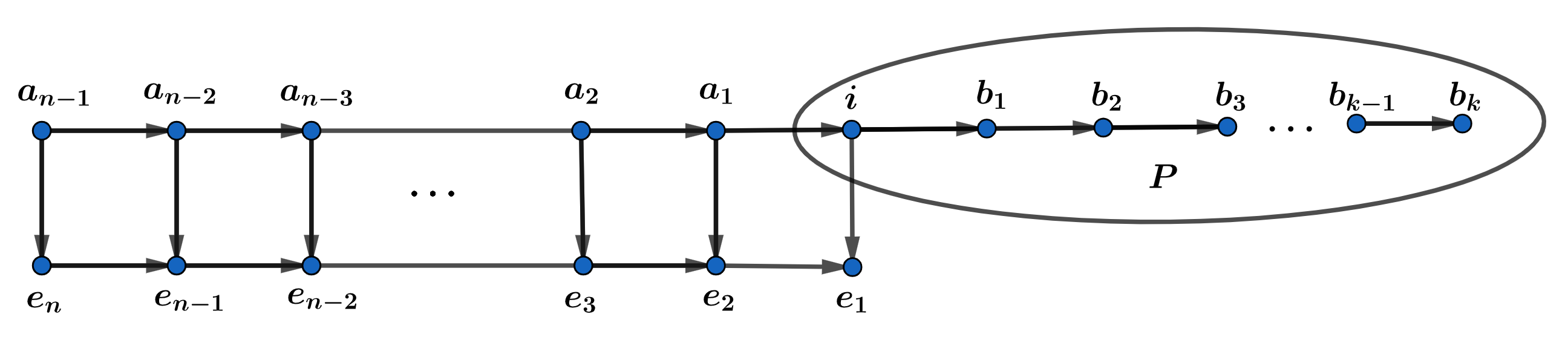}
\caption{The poset $P_0=P_{(n+k,n)/\emptyset}$.}
\label{LineGraph}
\end{figure}

When $P$ is the poset in Figure \ref{LineGraph}, we have the generating function
$$F(x,y;P)=\sum_{i\geq b_1\geq b_2\geq \cdots \geq b_k\geq 0}x^iy^0q^{b_1+b_2+\cdots +b_k}=\frac{1}{(x;q)_{k+1}}.$$

\begin{cor}\label{linegraphEGF}
Let $F(x,y;P)=\frac{1}{(x;q)_{k+1}}$.
Then the enumerative generating function of the set $\pi(P_0)$ is given by
\begin{align*}
\mathrm{PF}(\pi(P_0))=\frac{1-q+q^{n+1}-q^{n+k+1}}{(q;q)_{n+k+1}(q;q)_{n}}.
\end{align*}
Furthermore, the number of linear extensions of $P_0$ is
$e(P_0)=\frac{(2n+k)!(k+1)}{n!(n+k+1)!}$.
\end{cor}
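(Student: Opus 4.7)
The plan is to compute $F(x,y;P)$ for the chain poset in Figure \ref{LineGraph}, substitute into Corollary \ref{Main-Corollary2}, and then apply formula \eqref{linearExtension} for the linear-extension count. Since $F(x,y;P) = 1/(x;q)_{k+1}$ has no $y$-dependence, the formula \eqref{FormuPF} reduces to the difference $(q^n;q)_{k+1}^{-1} - q\,(q^{n+1};q)_{k+1}^{-1}$ divided by $(q;q)_n(q;q)_{n-1}$.

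First, I will consolidate the numerator using the shift identity $(q^{n+1};q)_{k+1} = (q^n;q)_{k+1}(1-q^{n+k+1})/(1-q^n)$ so that both terms share the denominator $(q^n;q)_{k+2}$. The combined numerator collapses to $(1-q^{n+k+1}) - q(1-q^n) = 1 - q + q^{n+1} - q^{n+k+1}$, and combining this with the rewriting $(q^n;q)_{k+2} = (q;q)_{n+k+1}/(q;q)_{n-1}$ cancels the extra $(q;q)_{n-1}$ and yields the advertised formula for $\mathrm{PF}(\pi(P_0))$.

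For the linear-extension count I will use \eqref{linearExtension} with $p = 2n+k$. The numerator $N(q) := 1 - q + q^{n+1} - q^{n+k+1}$ vanishes at $q=1$ and, since $N'(1) = -(k+1)$, satisfies $\lim_{q\to 1} N(q)/(1-q) = k+1$. Combining this with the standard limit $\lim_{q\to 1}(q;q)_m/(1-q)^m = m!$ and tracking $(1-q)$-powers carefully (the denominator contributes $(1-q)^{2n+k+1}$, the numerator a single $(1-q)$), the quantity $(1-q)^{2n+k}\mathrm{PF}(\pi(P_0))\big|_{q=1}$ evaluates to $(k+1)/((n+k+1)!\, n!)$, and multiplying by $(2n+k)!$ gives the stated $e(P_0)$. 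I do not expect a real obstacle: both steps reduce to routine $q$-Pochhammer bookkeeping, and the only subtlety is recognizing that the $(1-q)$-factor hidden in $N(q)$ is exactly what makes the limit in \eqref{linearExtension} finite and nonzero.
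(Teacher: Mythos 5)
Your proposal is correct and follows the paper's proof exactly: substitute $F(q^n,q^{n+1};P)$ and $F(q^{n+1},q^n;P)$ into \eqref{FormuPF}, combine over the common denominator $(q^n;q)_{k+2}=(q;q)_{n+k+1}/(q;q)_{n-1}$, and then extract $e(P_0)$ from \eqref{linearExtension} with $p=2n+k$. The only difference is that you spell out the $q$-Pochhammer bookkeeping and the $q\to 1$ limit that the paper leaves implicit.
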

\begin{proof}
By \eqref{FormuPF}, we have
\begin{align*}
\mathrm{PF}(\pi(P_0))=\frac{\frac{1}{(q^n;q)_{k+1}}-q\cdot \frac{1}{(q^{n+1};q)_{k+1}}}{(q;q)_{n}(q;q)_{n-1}}=\frac{1-q+q^{n+1}-q^{n+k+1}}{(q;q)_{n+k+1}(q;q)_{n}}.
\end{align*}
By \eqref{linearExtension}, we obtain the $e(P_0)$.
This completes the proof.
\end{proof}

In Corollary \ref{linegraphEGF}, if $k=0$, then we get the generating function for two-rowed plane partitions of length $n$.
This result was obtained by Andrews in \cite[Theorem 6.1]{Andrews2}. Andrews used MacMahon's partition analysis to obtain this result. Our process may be more straightforward.

For $\aleph(P_{(\lambda,\mu)/\emptyset},m)$, we may not anticipate an exact generation function formula.
\begin{exa}
With the help of \texttt{Maple}, we can obtain the following generation functions
\begin{small}
\begin{align*}
\sum_{m=0}^{\infty}\aleph(P_{(k+1,1)/\emptyset},m)x^m&=\frac{1+kx}{(1-x)^{k+3}};
\\ \sum_{m=0}^{\infty}\aleph(P_{(k+2,2)/\emptyset},m)x^m&=\frac{1+(2k+1)x+\frac{k(k+1)}{2}x^2}{(1-x)^{k+5}};
\\ \sum_{m=0}^{\infty}\aleph(P_{(k+3,3)/\emptyset},m)x^m&=\frac{1+3(k+1)x+\frac{(3k+1)(k+2)}{2}x^2
+\frac{k(k+1)(k+2)}{6}x^3}{(1-x)^{k+7}};
\\ \sum_{m=0}^{\infty}\aleph(P_{(k+4,4)/\emptyset},m)x^m&=\frac{1+(4k+6)x+(3k^2+11k+6)x^2+\frac{4k^3+21k^2+29k+6}{6}x^3
+\frac{k(k+1)(k+2)(k+3)}{24}x^4}{(1-x)^{k+9}}.
\end{align*}
\end{small}
\end{exa}

By \eqref{FormuTF}, if $F(x,y;P)=\frac{1}{1-x}$ (i.e., $k=0$), then we have
\begin{align}\label{SquareTF}
\mathrm{TF}(\pi(P_0))=\frac{1}{(xq;q)_n(q;q)_n}.
\end{align}

\begin{figure}[htp]
\centering
\includegraphics[width=13cm,height=4cm]{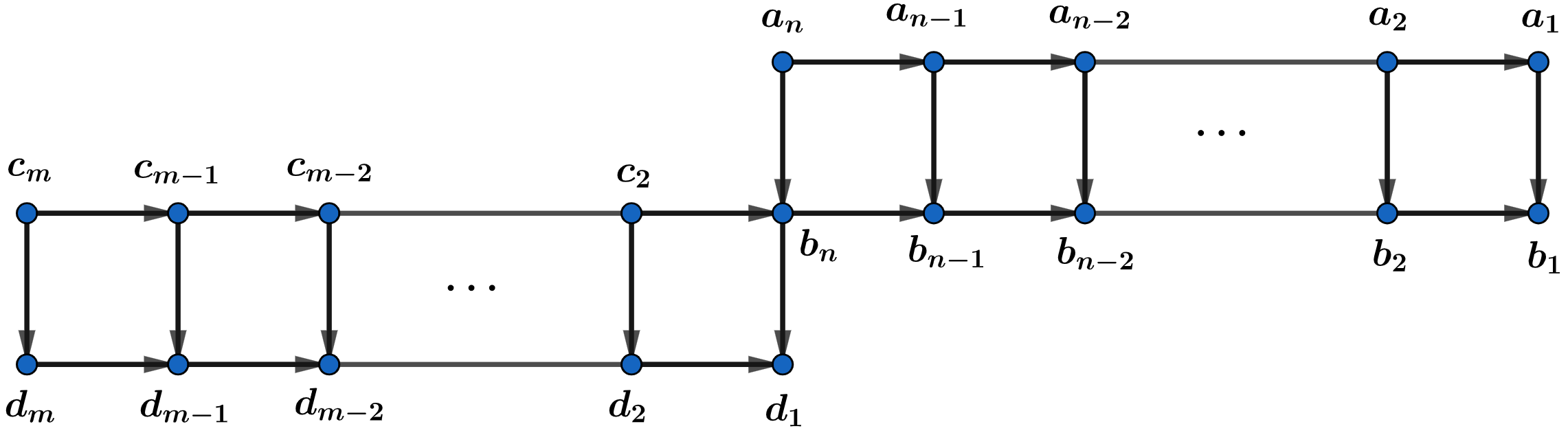}
\caption{The poset $P_1=P_{(n+m-1,n+m-1,m)/(m-1)}$.}
\label{PlaneP2}
\end{figure}

Now we consider the enumerative generating function of the set $\pi(P_1)$ in Figure \ref{PlaneP2}. This implies that $P$-partitions of the poset $P_1$ satisfy the following condition:
\begin{align*}
\left\{
             \begin{array}{ll}
             c_{m}\geq c_{m-1}\geq \cdots \geq c_2\geq b_n\geq b_{n-1}\geq \cdots \geq b_1\geq 0; \\
             d_{m}\geq d_{m-1}\geq \cdots \geq d_1\geq 0;\ \ a_{n}\geq a_{n-1}\geq \cdots \geq a_1; \\
             c_k\geq d_{k}\ \ \text{for}\ \ 2\leq k\leq m;\ \ b_n\geq d_1; \\
             a_k\geq b_{k}\ \ \text{for}\ \ 1\leq k\leq n.
             \end{array}
\right.
\end{align*}

\begin{cor}\label{corr1}
The enumerative generating function of the set $\pi(P_1)$ in Figure \ref{PlaneP2} is given by
$$\mathrm{PF}(\pi(P_1))=\frac{1-q-q^{m+n+1}+q^{m+2}}{(q;q)_{m-1}(q;q)_{m+n+1}(q;q)_n}.$$
Furthermore, the number of linear extensions of $P_1$ is
$e(P_1)=\frac{(2n+2m-1)!}{(m-1)!(n-1)!(m+n+1)!}$.
\end{cor}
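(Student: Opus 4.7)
The plan is to apply Corollary \ref{Main-Corollary2} to a carefully chosen base poset $P\subset P_1$, with the role of $n$ in \eqref{FormuPF} played by $m-1$. The key step (and main obstacle of the argument) is recognizing the right identification: take $A:=c_2$ and $B:=d_1$, two vertices of $P_1$ which are not directly comparable but are linked through $b_n$. The base poset $P$ then consists of these two boundary vertices together with the row-1 chain $a_n\geq\cdots\geq a_1$ and the right-hand row-2 chain $b_n\geq\cdots\geq b_1$, endowed with the relations $A\geq b_n$, $a_k\geq b_k$ for $1\leq k\leq n$, and $b_n\geq B$ inherited from $P_1$ (so that $A\geq B$ follows transitively through $b_n$). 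The operator $\psi_{z_{m-1}}\varphi_{z_{m-2}}\cdots\varphi_{z_1}$ then adjoins the two chains $c_m,c_{m-1},\ldots,c_3$ above $A$ and $d_m,\ldots,d_2$ above $B$, together with the cross relations $c_k\geq d_k$ for $2\leq k\leq m$, and one checks that this reproduces $P_1$ exactly (both posets have $2n+2m-1$ elements with the same covering structure).

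Next I compute $F(x,y;P)$. For fixed $t=\sigma(b_n)$, summing $\sigma(A)\in\{t,t+1,\ldots\}$ contributes $x^t/(1-x)$, summing $\sigma(B)\in\{0,\ldots,t\}$ contributes $(1-y^{t+1})/(1-y)$, and the remaining data is just a $2\times n$ rectangular plane partition in the $a_i,b_i$ weighted by $x^{\pi_{2,1}}q^{|\pi|}$ (since $\pi_{2,1}$ corresponds to $b_n=t$). Specializing Gansner's Theorem \ref{Theorem-Gansner} with $r=2$, $c=n$, $z_{-1}=x$, and $z_0=\cdots=z_{n-1}=1$ yields
\begin{align*}
G(x):=\sum_\pi x^{\pi_{2,1}}q^{|\pi|}=\frac{1}{(q;q)_n(xq^2;q)_n},
\end{align*}
so that $F(x,y;P)=\bigl(G(x)-yG(xy)\bigr)/\bigl((1-x)(1-y)\bigr)$.

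Substituting into \eqref{FormuPF} with $m-1$ in place of $n$, the crucial observation is that the two contributions of $G(q^{2m-1})$ arising inside $F(q^{m-1},q^m;P)-qF(q^m,q^{m-1};P)$ both appear with coefficient $q^m$ and hence cancel, leaving $\bigl(G(q^{m-1})-qG(q^m)\bigr)/\bigl((1-q^{m-1})(1-q^m)\bigr)$. Combining over a common $q$-Pochhammer denominator via $(q^{m+2};q)_n=(q^{m+1};q)_n(1-q^{m+n+1})/(1-q^{m+1})$ produces a factor $(q;q)_m(1-q-q^{m+n+1}+q^{m+2})/\bigl((q;q)_n(q;q)_{m+n+1}\bigr)$, while the denominator $(1-q^{m-1})(1-q^m)(q;q)_{m-1}(q;q)_{m-2}$ telescopes to $(q;q)_m(q;q)_{m-1}$; cancelling $(q;q)_m$ delivers the claimed $\mathrm{PF}(\pi(P_1))$. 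For $e(P_1)$, factor the numerator as $(1-q)\bigl(1+q^{m+2}+q^{m+3}+\cdots+q^{m+n}\bigr)$: the bracket tends to $n$ as $q\to1$ and the denominator contributes $(m-1)!(m+n+1)!n!$ after extracting $(1-q)^{2m+2n}$, so formula \eqref{linearExtension} gives $e(P_1)=(2m+2n-1)!\,n/\bigl((m-1)!(m+n+1)!n!\bigr)=(2m+2n-1)!/\bigl((m-1)!(n-1)!(m+n+1)!\bigr)$.
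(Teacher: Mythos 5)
Your proof is correct, but it takes a genuinely different decomposition from the paper's. The paper bootstraps its own machinery twice: it first applies \eqref{FormuTF} to $F=1/(1-x)$ to get the trace generating function $1/((xq;q)_n(q;q)_n)$ of the $(n,n)$ two-rowed plane partition (equation \eqref{SquareTF}), then feeds that $y$-free function back into \eqref{FormuPF} with parameter $m$ — so its cut is at $A=b_n$ with $B$ a phantom minimum of weight $0$, the $d$-chain arising entirely from the $\psi\varphi\cdots\varphi$ composition, and no cancellation is needed. You instead cut at $A=c_2$, $B=d_1$, compute a genuinely bivariate seed $F(x,y;P)=\bigl(G(x)-yG(xy)\bigr)/\bigl((1-x)(1-y)\bigr)$ by importing Gansner's Theorem \ref{Theorem-Gansner} for the $2\times n$ rectangle, and apply \eqref{FormuPF} with parameter $m-1$; the price is the extra (correct) observation that the two $G(q^{2m-1})$ terms cancel, after which both arguments land on the identical intermediate expression $\frac{1}{(q;q)_m(q;q)_{m-1}(q;q)_n}\bigl(\frac{1}{(q^{m+1};q)_n}-\frac{q}{(q^{m+2};q)_n}\bigr)$ and the remaining simplification and the $e(P_1)$ computation coincide. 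Your route is self-contained modulo Gansner and makes the poset identification more transparent, while the paper's is slicker in that the seed function is univariate and is itself an output of Corollary \ref{Main-Corollary2}. One small caveat: Theorem \ref{FullTwoPlane} is stated for $n\geq 2$ operators, so your use of parameter $m-1$ strictly requires $m\geq 3$; the case $m=2$ needs the single-operator identity $\psi_{z}\odot F\big|_{x=z=q}=\bigl(F(q,q^2)-qF(q^2,q)\bigr)/\bigl((q;q)_1(q;q)_0\bigr)$, which follows immediately from \eqref{Psimap}, so the gap is only one of bookkeeping.
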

\begin{proof}
According to \eqref{SquareTF}, let $F(x,y;P)=\frac{1}{(xq;q)_n(q;q)_n}$.
Then based on \eqref{FormuPF}, we can obtain
\begin{align*}
\mathrm{PF}(\pi(P_1))&=\frac{F(q^m,q^{m+1};P)-q\cdot F(q^{m+1};q^m;P)}{(q;q)_m (q;q)_{m-1}}
\\&=\frac{1}{(q;q)_m (q;q)_{m-1}}\left(\frac{1}{(q^{m+1};q)_{n}(q;q)_n}-q\cdot \frac{1}{(q^{m+2};q)_{n}(q;q)_n}\right)
\\&=\frac{1-q-q^{m+n+1}+q^{m+2}}{(q;q)_{m-1}(q;q)_{m+n+1}(q;q)_n}.
\end{align*}
By \eqref{linearExtension}, we obtain the $e(P_1)$.
The proof of the corollary is now complete.
\end{proof}

Similarly, we can obtain more generation functions. The following reasoning is analogous to that in Corollary \ref{corr1} and is left to the reader.

\begin{figure}[htp]
\centering
\includegraphics[width=16cm,height=5cm]{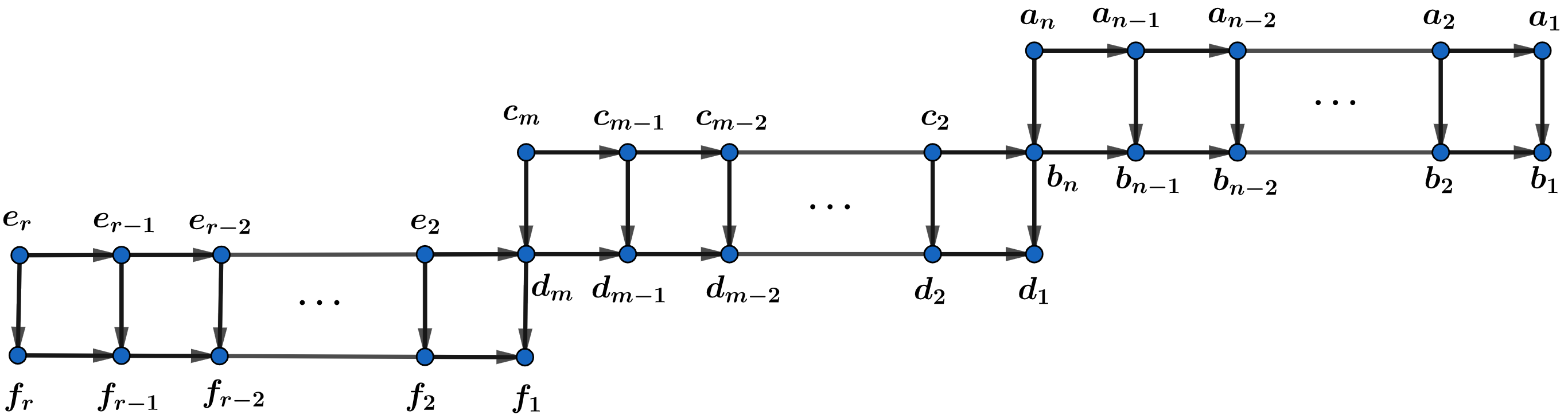}
\caption{The poset $P_2=P_{(n+m+r-2,n+m+r-2,m+r-1,r)/(m+r-2,r-1)}$.}
\label{PlaneP3}
\end{figure}

\begin{cor}\label{corr2}
The enumerative generating function of the set $\pi(P_2)$ in Figure \ref{PlaneP3} is given by
\begin{align*}
\mathrm{PF}(\pi(P_2))=&\frac{(1-q^m)(q^{r+1};q)_{m+n}(q^{m+r+1};q)_{n+1}(1-q^{m+r}-q+q^{r+1})}
{(q;q)_{r+m+n+1}(q;q)_{r+m+n}(q;q)_{m+n}(q;q)_n}
\\+&\frac{q^r(q^m;q)_{n+1}(q^{r+1};q)_{m+n}\big(q^2(1-q^r)(1-q^{m+r+1})-(1-q^{m+n+r+1})(1-q^{m+r})\big)}
{(q;q)_{r+m+n+1}(q;q)_{r+m+n}(q;q)_{m+n}(q;q)_n}.
\end{align*}
Furthermore, the number of linear extensions of $P_2$ is
$$e(P_2)=(2r+2n+2m-2)!\cdot\left(\frac{m^2}{n!r!(m+r)!(m+n)!}-\frac{m^2+m(n+r+1)+rn}{n!r!(m-1)!(m+n+r+1)!}\right).$$
\end{cor}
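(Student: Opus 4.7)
My plan is to mirror the derivation of Corollary \ref{corr1} but iterate it once more: I view $P_2$ as the $\overline{P_1}$ obtained by applying Corollary \ref{Main-Corollary2} to $P_1$, with the trace generating function $\mathrm{TF}(\pi(P_1))$ playing the role of the base $F(x,y;P)$. As a preliminary, I would substitute $F(x,y;P)=\tfrac{1}{(xq;q)_n(q;q)_n}$ from \eqref{SquareTF} (which has no $y$-dependence) into the $\mathrm{TF}$ formula \eqref{FormuTF} with parameter $m$, instead of the $\mathrm{PF}$ formula \eqref{FormuPF} as in Corollary \ref{corr1}'s proof; this produces an explicit rational expression for $\mathrm{TF}(\pi(P_1))$ in $x$ and $q$. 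The $x$-variable here tracks the cell $\pi_{3,1}$ of $P_1$ (the leftmost entry of row $3$), which, under the natural shift by $r-1$, becomes the pivot $\pi_{3,r}$ of $P_2$.

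Next, I would apply the $\mathrm{PF}$ formula \eqref{FormuPF} with parameter $r$ to this new base. Before computing, I verify the combinatorial identification $\overline{P_1}=P_2$ using Theorem \ref{TwoPlaneQq} and Figure \ref{FullGF}: the new top extension row of length $r$ fills cells $(3,1),\ldots,(3,r-1)$ of $P_2$ followed by the pivot $i=\pi_{3,r}$, while the new bottom extension row of length $r$ (the spurious $j$-cell being annihilated by the absent $y$) supplies row $4$ of $P_2$; the cross-inequalities $a_k\geq e_{k+1}$ and $i\geq e_1$ become the column-dominance $\pi_{3,j}\geq\pi_{4,j}$. The main obstacle is the algebraic simplification: \eqref{FormuPF} forces a four-term numerator (each of $F(q^r,\cdot)$ and $F(q^{r+1},\cdot)$ is itself a two-term expression from the first step), and these must be combined over a common denominator using $(q^a;q)_b(q^{a+b};q)_c=(q^a;q)_{b+c}$ and $\tfrac{1}{(q^a;q)_b}=\tfrac{1-q^{a+b}}{(q^a;q)_{b+1}}$. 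After telescoping, the denominator collapses to $(q;q)_{r+m+n+1}(q;q)_{r+m+n}(q;q)_{m+n}(q;q)_n$, and the numerator regroups into the two reported pieces---one carrying the factor $(1-q^m)(q^{m+r+1};q)_{n+1}(1-q^{m+r}-q+q^{r+1})$ and one carrying $q^r(q^m;q)_{n+1}\bigl[q^2(1-q^r)(1-q^{m+r+1})-(1-q^{m+n+r+1})(1-q^{m+r})\bigr]$.

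Finally, $e(P_2)$ follows from \eqref{linearExtension} with $p=|\lambda|-|\mu|=2(n+m+r-1)$. At leading order as $q\to 1$, each factor $(q;q)_k$ in the denominator contributes $(1-q)^k k!$, the double-product $(1-q^m)(1-q^{m+r}-q+q^{r+1})$ in the first numerator piece vanishes as $m^2(1-q)^2$, and the bracket $q^2(1-q^r)(1-q^{m+r+1})-(1-q^{m+n+r+1})(1-q^{m+r})$ in the second piece vanishes as $-(m^2+m(n+r+1)+rn)(1-q)^2$. After multiplication by $p!=(2n+2m+2r-2)!$, these two contributions yield precisely the two summands $\tfrac{m^2}{n!r!(m+r)!(m+n)!}$ and $-\tfrac{m^2+m(n+r+1)+rn}{n!r!(m-1)!(m+n+r+1)!}$ of the stated formula.
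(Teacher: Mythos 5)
Your proposal is correct and follows exactly the route the paper intends: the paper gives no explicit proof of this corollary, stating only that "the reasoning is analogous to that in Corollary \ref{corr1}," and your plan—feed \eqref{SquareTF} into \eqref{FormuTF} with parameter $m$ to get a one-variable trace generating function for $P_1$, then apply \eqref{FormuPF} with parameter $r$, and finally extract $e(P_2)$ from \eqref{linearExtension} via the $q\to1$ asymptotics—is precisely that analogue. Your combinatorial identification of the length-$r$ extension with rows $3$ and $4$ of $P_2$ (with the phantom $j$-cell killed by the missing $y$) and your leading-order coefficients $m^2$ and $-(m^2+m(n+r+1)+rn)$ both check out.
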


\begin{figure}[htp]
\centering
\includegraphics[width=16cm,height=3.8cm]{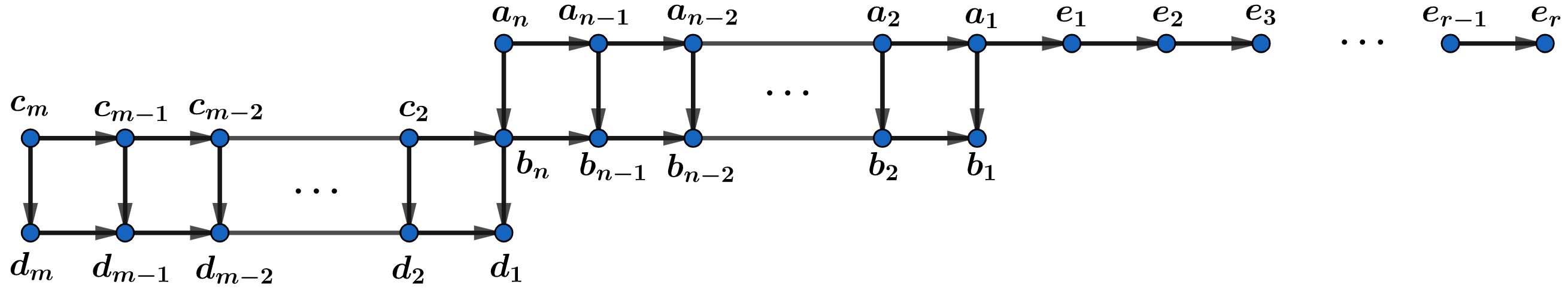}
\caption{The poset $P_3=P_{(n+r+m-1,n+m-1,m)/(m-1)}$.}
\label{PlaneP1}
\end{figure}

\begin{cor}\label{corr3}
The enumerative generating function of the set $\pi(P_3)$ in Figure \ref{PlaneP1} is given by
\begin{align*}
\mathrm{PF}(\pi(P_3))=\frac{(q^{m+n+1};q)_{r+1}(1-q-q^{m+n}+
q^{m+1})-q^m(q^n;q)_{r+1}(1-q^2+q^{m+2}-q^{n+r+m+1})}
{(q;q)_{n+r+m+1}(q;q)_m(q;q)_{n+r}}.
\end{align*}
Furthermore, the number of linear extensions of $P_3$ is
$$e(P_3)=(2n+2m+r-1)!\cdot\left(\frac{n}{m!(n+r)!(m+n)!}-\frac{n+r+1}{m!(n-1)!(m+n+r+1)!}\right).$$
\end{cor}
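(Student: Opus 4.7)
The plan is to imitate the proof of Corollary \ref{corr1}. In that proof, the base poset is $P_0 = P_{(n,n)/\emptyset}$ with $F(x,y;P_0)=\frac{1}{(xq;q)_n(q;q)_n}$ (read off from \eqref{SquareTF}), and the $m$ operators $\varphi_{z_1},\ldots,\varphi_{z_{m-1}},\psi_{z_m}$ extend $P_0$ to $P_1$ by attaching the left portions of rows~2 and~3. Since $P_3$ differs from $P_1$ only by an additional tail of $r$ cells appended to the right end of row~1, and this tail lies outside the reach of the operators (which only augment the left side of the base), the tail must live inside the base itself. I would therefore take the base to be $P = P_{(n+r,n)/\emptyset}$, with the marked vertex $A$ being the leftmost cell of row~2 (the same choice as in Corollary \ref{corr1}), and apply the same $m$ operators.

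The first substantive task is to derive a closed form for $F(x,y;P_{(n+r,n)/\emptyset})$ suitable for the two specializations $F(q^m,q^{m+1};P)$ and $F(q^{m+1},q^m;P)$. Each skew plane partition of shape $(n+r,n)/\emptyset$ splits as a $2\times n$ rectangle together with a length-$r$ row-1 tail constrained only by the value of the rightmost rectangle cell $u_n$. The tail's generating function is the Gaussian binomial $\binom{u_n+r}{r}_q$, which, when summed against the refined generating function of the rectangle (a slight enhancement of \eqref{SquareTF} tracking both $v_1$ and $u_n$), yields an explicit rational function of $x,y,q$. The two critical specializations are then expected to incorporate the Pochhammer factors $(q^{m+n+1};q)_{r+1}$ and $(q^n;q)_{r+1}$, together with the degree-one polynomials $1-q-q^{m+n}+q^{m+1}$ and $1-q^2+q^{m+2}-q^{m+n+r+1}$ appearing in the statement.

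Plugging these two evaluations into \eqref{FormuPF} with $n_{\text{ops}}=m$, namely
$\mathrm{PF}(\pi(P_3))=\frac{F(q^m,q^{m+1};P)-q\,F(q^{m+1},q^m;P)}{(q;q)_m(q;q)_{m-1}}$,
and combining the denominators through $(q;q)_m(q^{m+1};q)_{n+r+1}=(q;q)_{n+r+m+1}$ produces exactly the claimed formula. For the linear extensions I would apply \eqref{linearExtension} with $p=|P_3|=2n+2m+r-1$, evaluating $\lim_{q\to 1}(1-q)^p\mathrm{PF}(\pi(P_3))$. Using the asymptotics $(q^a;q)_k\sim (1-q)^k\,a(a+1)\cdots(a+k-1)$ as $q\to 1$ and the first-order vanishings $1-q-q^{m+n}+q^{m+1}\sim n(1-q)$ and $1-q^2+q^{m+2}-q^{m+n+r+1}\sim(n+r+1)(1-q)$, the limit reduces to $\frac{n}{m!(n+r)!(m+n)!}-\frac{n+r+1}{m!(n-1)!(m+n+r+1)!}$, which multiplied by $(2n+2m+r-1)!$ yields the claimed value of $e(P_3)$.

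The main obstacle is the second step: obtaining $F(x,y;P_{(n+r,n)/\emptyset})$ in a form whose two specializations split cleanly into the required Pochhammer-plus-polynomial structure. Since the skew shape $(n+r,n)/\emptyset$ is not a rectangle, no direct product formula from Gansner's theorem applies, and the generating function is intrinsically a sum rather than a single product. One can navigate this either by a careful direct summation against the Gaussian binomial tail factor (tractable but tedious), by a determinantal expression via a Lindström–Gessel–Viennot argument on non-intersecting lattice paths, or by iterating the operator machinery of Theorem~\ref{FullTwoPlane} from a simpler base so as to build the tail in stages.
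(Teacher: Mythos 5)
Your proposal follows essentially the paper's intended route (the paper explicitly leaves this corollary to the reader as ``analogous to Corollary \ref{corr1}''): absorb the length-$r$ tail into the base poset $P_{(n+r,n)/\emptyset}$ with $A$ the leftmost cell of its bottom row, apply the $m$ operators via \eqref{FormuPF}, and extract $e(P_3)$ from \eqref{linearExtension} by the $q\to 1$ asymptotics --- all of which you set up correctly, including the first-order vanishings $n(1-q)$ and $(n+r+1)(1-q)$. The one step you single out as the ``main obstacle,'' a closed form for $F(x,y;P_{(n+r,n)/\emptyset})$, is not an obstacle at all: it is precisely $\mathrm{TF}(\pi(P_{(n+r,n)/\emptyset}))$, which \eqref{FormuTF} applied to $F(x,y;P)=1/(x;q)_{r+1}$ (the chain of Figure \ref{LineGraph} with $k=r$) yields at once as
\[
\frac{\dfrac{1}{(q^{n};q)_{r+1}}-\dfrac{x}{(xq^{n};q)_{r+1}}}{(x;q)_{n}\,(q;q)_{n-1}},
\]
so your third listed alternative (iterating the operator machinery from a simpler base) is the paper's own computation --- the exact analogue of how \eqref{SquareTF} feeds Corollary \ref{corr1} --- and it already delivers the Pochhammer-plus-polynomial structure, making the Gaussian-binomial summation and the Lindstr\"om--Gessel--Viennot determinant unnecessary detours.
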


\section{New Formulas}

Similarly, we utilize Corollary \ref{Main-Corollary2} and Equation \eqref{linearExtension} to calculate the enumerative generating functions $\mathrm{PF}(\pi(P))$ and linear extensions for certain posets $P$.
These posets can be seen as the extension of two-rowed plane partitions.
In this section, the formula we have obtained is nontrivial.

\subsection{Two-Rowed Plane Partition Extension of the Diamond Posets}

We consider the diamond posets studied in \cite{Andrews8,Andrews10}.
In \cite{Andrews8}, Andrews obtained the following full generating function of the diamond poset in Figure \ref{Square1}:
\begin{align}
D(x_1,x_2,x_3,x_4)&=\sum_{u_1\geq u_2;\ u_1\geq u_3; \atop u_2\geq u_4;\ u_3\geq u_4\geq 0}x_1^{u_1}x_2^{u_2}x_3^{u_3}x_4^{u_4}\nonumber
\\&=\frac{1-x_1^2x_2x_3}{(1-x_1)(1-x_1x_2)(1-x_1x_3)(1-x_1x_2x_3)(1-x_1x_2x_3x_4)}.\label{DiamondDDXY}
\end{align}

\begin{figure}[htp]
\centering
\includegraphics[width=3cm,height=3cm]{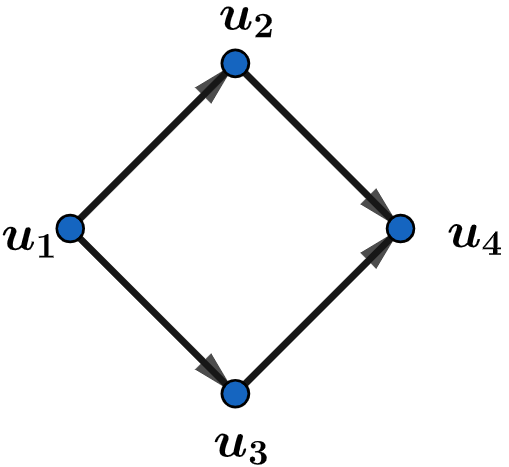}
\caption{The diamond poset.}
\label{Square1}
\end{figure}

\begin{figure}[htp]
\centering
\includegraphics[width=10cm,height=3.5cm]{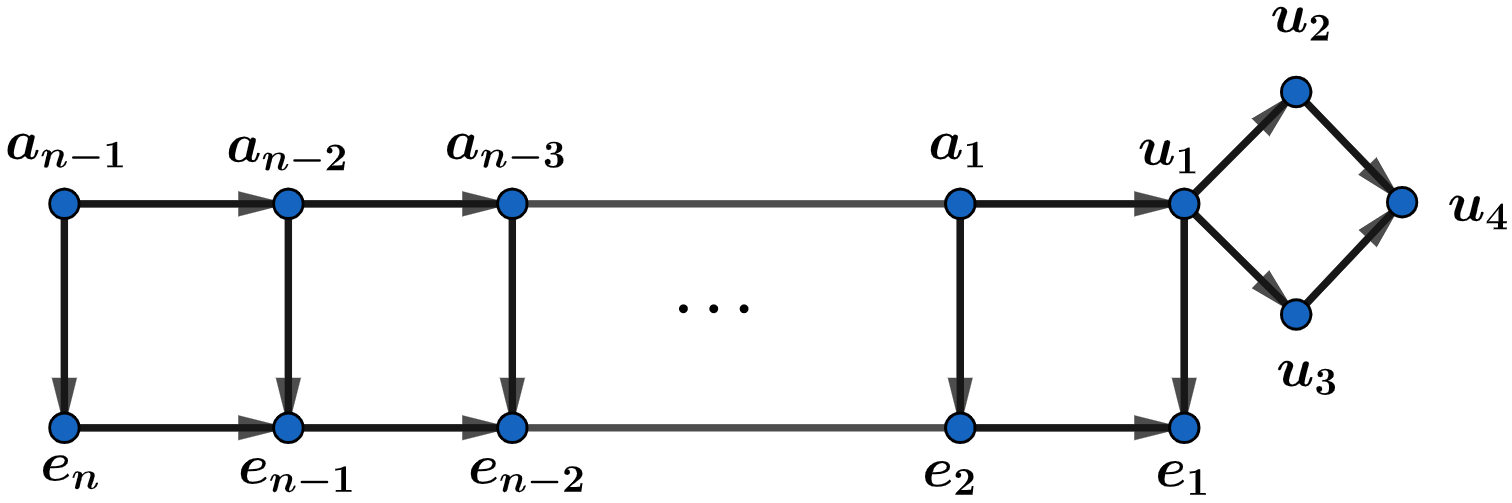}
\caption{The poset $P_4$.}
\label{PlaneP4}
\end{figure}

\begin{figure}[htp]
\centering
\includegraphics[width=10cm,height=4cm]{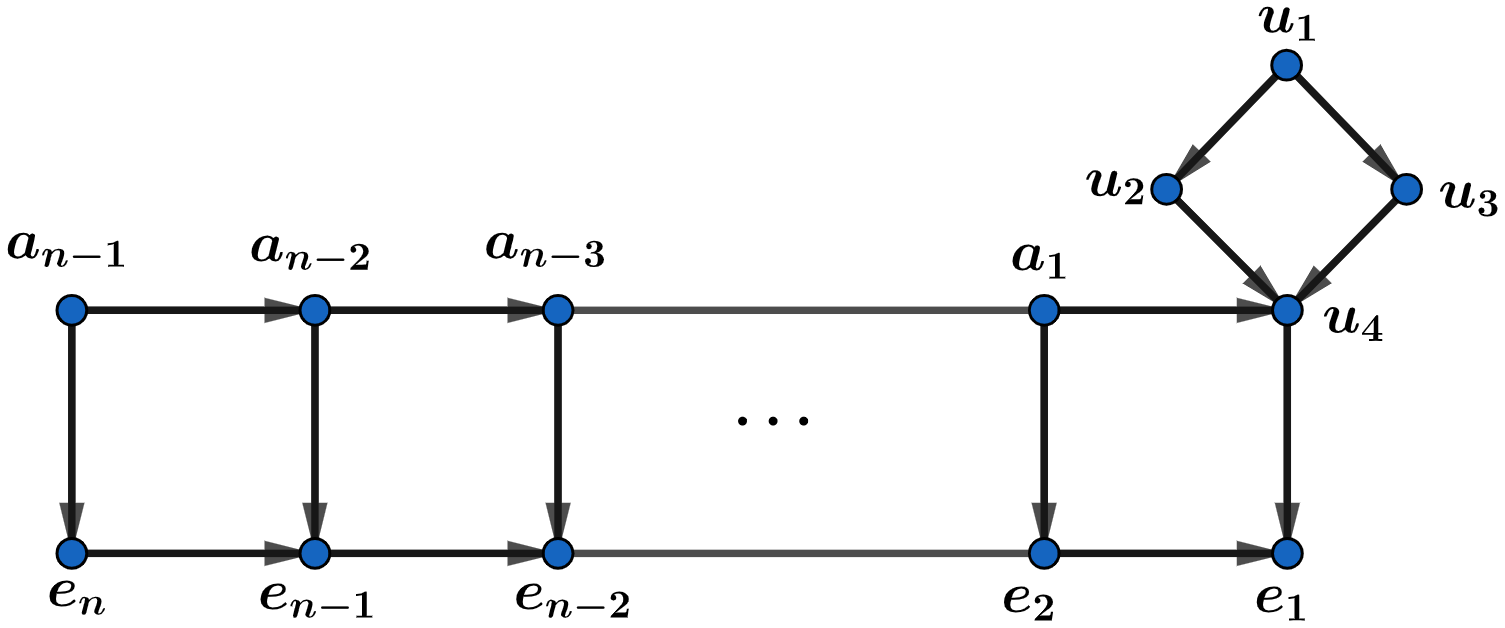}
\caption{The poset $P_5$.}
\label{PlaneP5}
\end{figure}

When $F(x,y;P)=D(x,q,q,q)$, we obtain that the enumerative generating function of the set $\pi(P_4)$ in Figure \ref{PlaneP4} is
\begin{align*}
\mathrm{PF}(\pi(P_4))=\frac{(1+q^{n+1})(1-q^{n+4})-q(1+q^{n+2})(1-q^n)}{(q;q)_n(q;q)_{n+4}}.
\end{align*}
The number of linear extensions of $P_4$ is $e(P_4)=\frac{8\cdot(2n+3)!}{n!(n+4)!}$.
When $F(x,y;P)=D(q,q,q,x)$, we obtain that the enumerative generating function of the set $\pi(P_5)$ in Figure \ref{PlaneP5} is
\begin{align*}
\mathrm{PF}(\pi(P_5))=\frac{1+q^2}{(q;q)_n(q;q)_{n-1}(1-q^2)(1-q^3)(1-q^{n+3})(1-q^{n+4})}.
\end{align*}
The number of linear extensions of $P_5$ is $e(P_5)=\frac{(2n+3)!}{3\cdot n!(n-1)!(n+3)(n+4)}$.

\begin{figure}[htp]
\centering
\includegraphics[width=5cm,height=3cm]{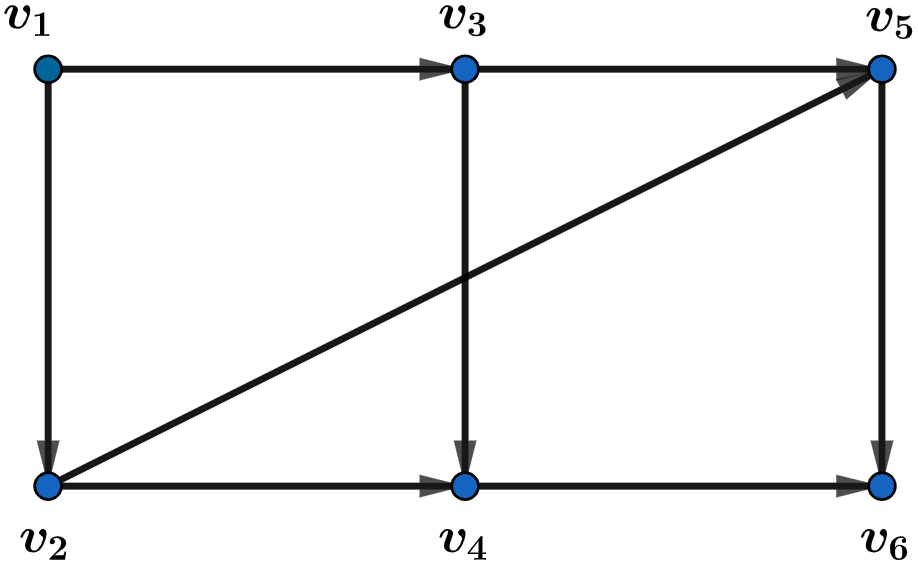}
\caption{The diamond poset with diagonal.}
\label{Square2}
\end{figure}

\begin{figure}[htp]
\centering
\includegraphics[width=12cm,height=4cm]{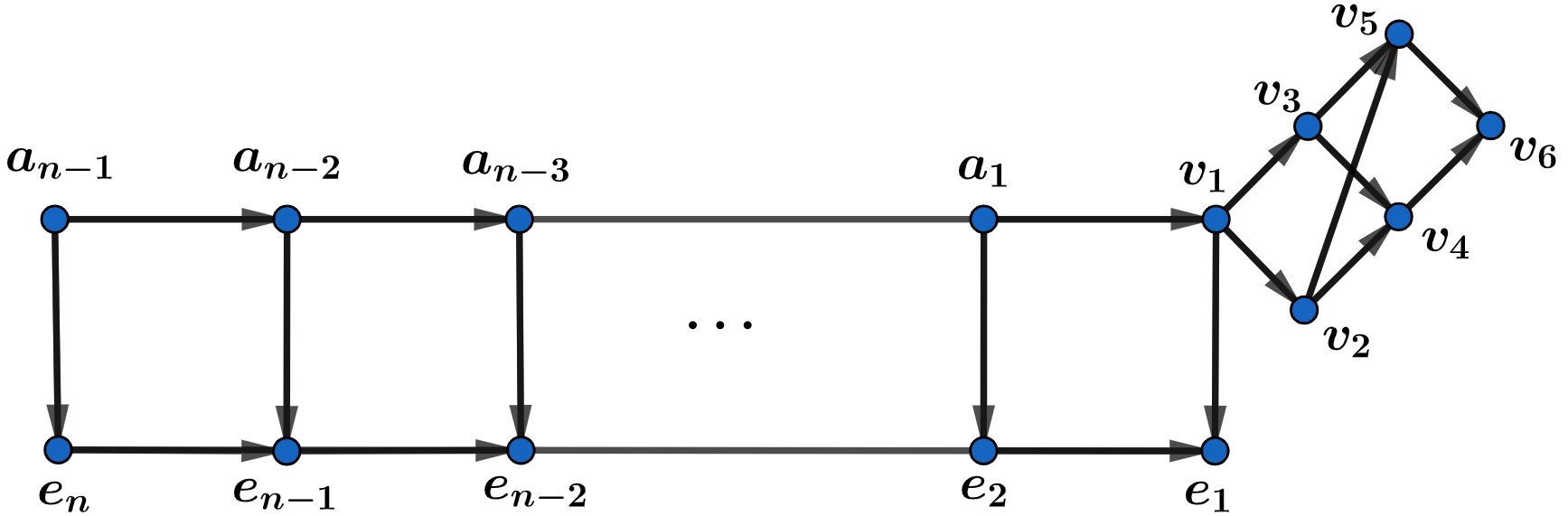}
\caption{The poset $P_6$.}
\label{PlaneP6}
\end{figure}

\begin{figure}[htp]
\centering
\includegraphics[width=12cm,height=5cm]{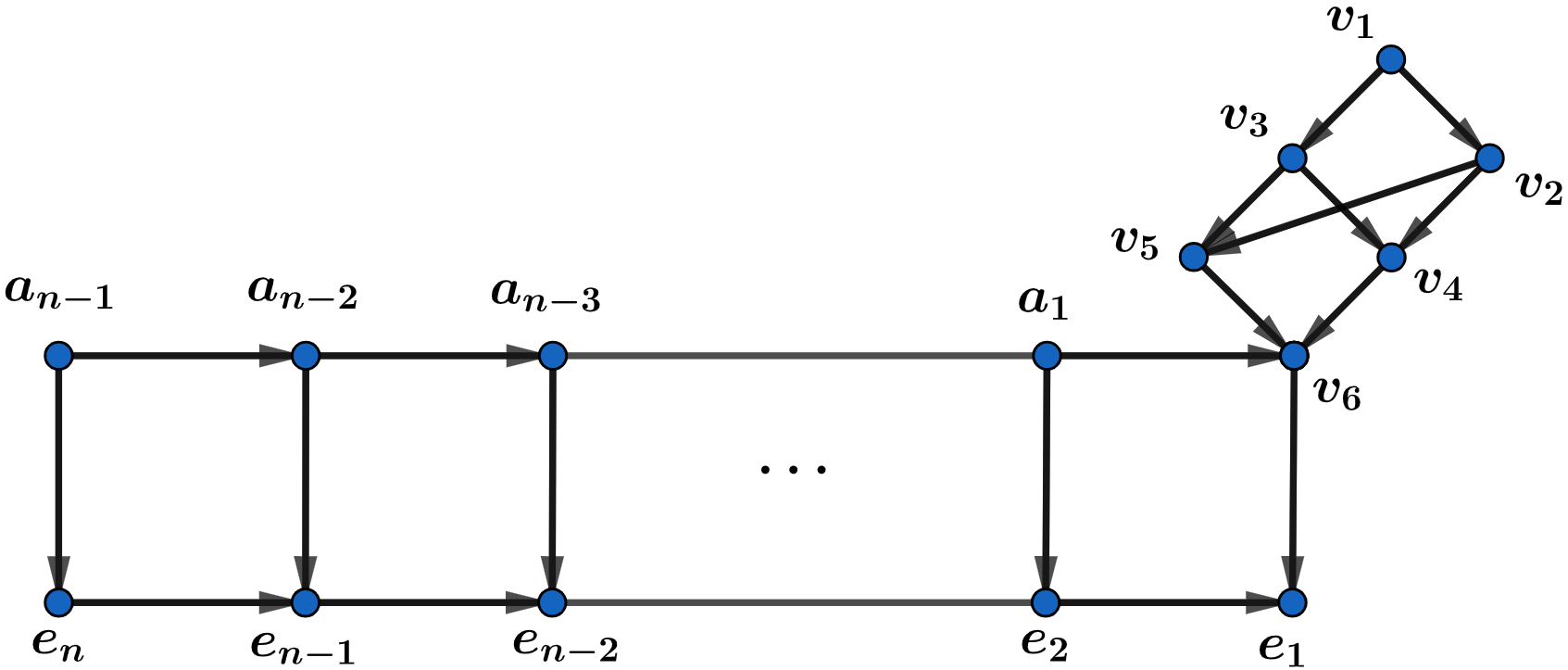}
\caption{The poset $P_7$.}
\label{PlaneP7}
\end{figure}

In \cite{Andrews10}, Andrews, Paule, and Riese obtained the following full generating function of the diamond poset with diagonal in Figure \ref{Square2}:
\begin{footnotesize}
\begin{align*}
&H(x_1,x_2,x_3,x_4,x_5,x_6)=\sum_{v_1\geq v_3\geq v_5\geq v_6\geq 0;\ v_2\geq v_4\geq v_6; \atop v_1\geq v_2;\ v_3\geq v_4;\ v_2\geq v_5}x_1^{v_1}x_2^{v_2}x_3^{v_3}x_4^{v_4}x_5^{v_5}x_6^{v_6}
\\=&\frac{(1-x_1^2x_2x_3)(1-x_1^2x_2^2x_3^2x_4x_5)}{(1-x_1)(1-x_1x_2)(1-x_1x_3)(1-x_1x_2x_3)(1-x_1x_2x_3x_4)
(1-x_1x_2x_3x_5)(1-x_1x_2x_3x_4x_5)(1-x_1x_2x_3x_4x_5x_6)}.
\end{align*}
\end{footnotesize}
When $F(x,y;P)=H(x,q,q,q,q,q)$, we obtain that the enumerative generating function of the set $\pi(P_6)$ in Figure \ref{PlaneP6} is
\begin{align*}
\mathrm{PF}(\pi(P_6))&=\frac{(1-q^{2n+2})(1-q^{2n+6})(1-q^{n+2})(1-q^{n+4})(1-q^{n+6})}{(q;q)_{n+6}(q;q)_{n+4}}
\\ &\ \ \ +\frac{-q(1-q^{2n+4})(1-q^{2n+8})(1-q^n)(1-q^{n+1})(1-q^{n+3})}{(q;q)_{n+6}(q;q)_{n+4}} .
\end{align*}
The number of linear extensions of $P_6$ is
$$e(P_6)=\frac{24\cdot (2n+5)!}{n!(n+6)!}.$$
When $F(x,y;P)=H(q,q,q,q,q,x)$, we obtain that the enumerative generating function of the set $\pi(P_7)$ in Figure \ref{PlaneP7} is
\begin{align*}
\mathrm{PF}(\pi(P_7))=\frac{1-q+q^4-q^5}{(q;q)_n(q;q)_{n-1}(1-q)(1-q^2)^2(1-q^3)(1-q^5)(1-q^{n+5})(1-q^{n+6})}.
\end{align*}
The number of linear extensions of $P_7$ is
$$e(P_7)=\frac{(2n+5)!}{30(n+5)(n+6)\cdot n!(n-1)!}.$$

\subsection{An Extension of $V$-poset}

Assuming that the set $\{h_1,h_2,\ldots,h_k\}$ satisfies the following order relationship:
$$h_1\succ h_2\succ \cdots \succ h_r \prec \cdots \prec h_{k-1} \prec h_k$$
for a certain $1\leq r\leq k$. This poset is called \emph{$V$-poset}.
The Hasse diagram of the $V$-poset is a $V$-shape with only one minimal element.

Now let's consider the poset $P_8$ in Figure \ref{PlaneMN}. This is an extension of $V$-poset.
We calculate the enumerative generating function of the set $\pi(P_8)$.

\begin{figure}[htp]
\centering
\includegraphics[width=10cm,height=9cm]{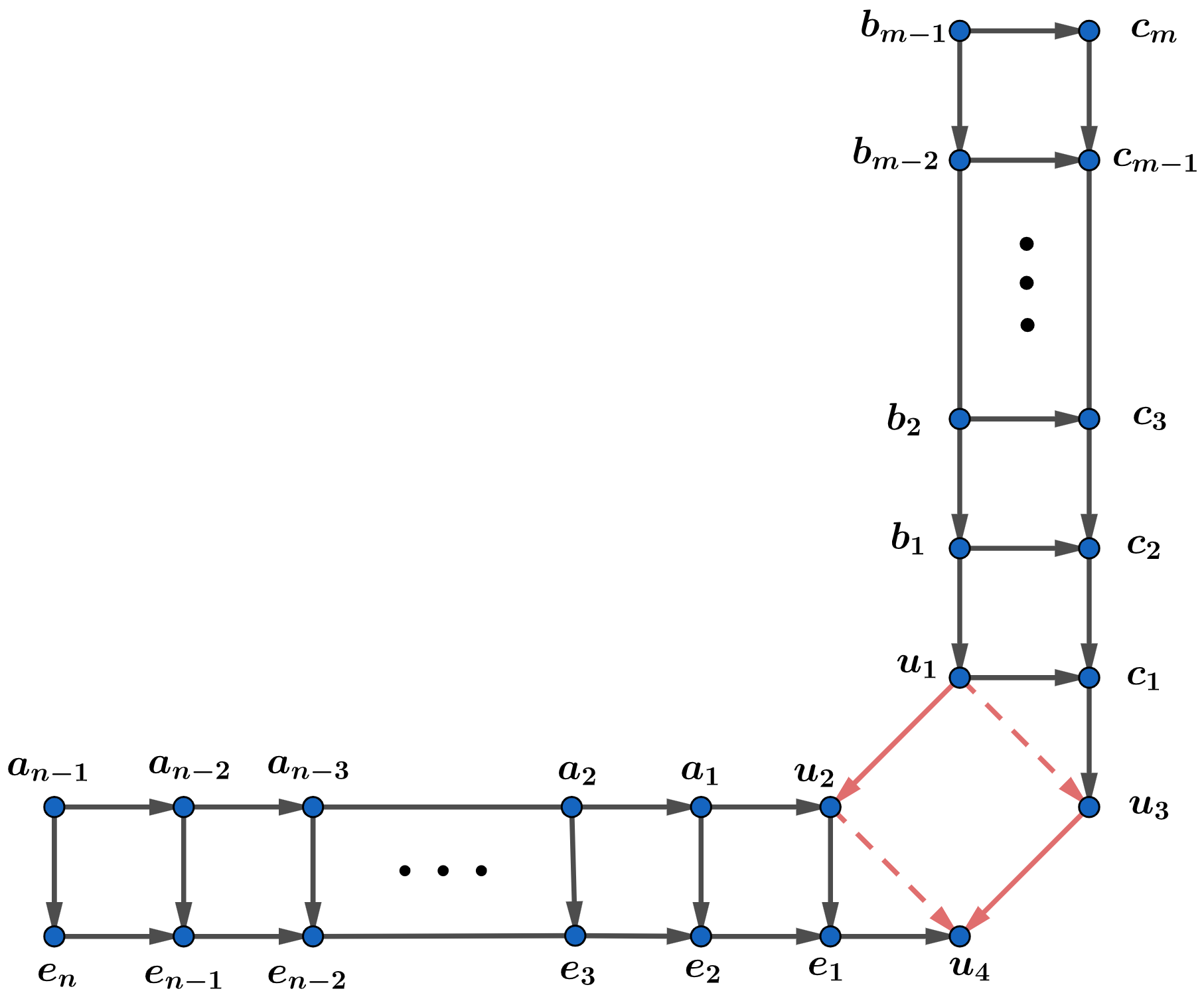}
\caption{The poset $P_8$.}
\label{PlaneMN}
\end{figure}

\begin{cor}
The enumerative generating function of the set $\pi(P_8)$ in Figure \ref{PlaneMN} is
\begin{align*}
\mathrm{PF}(\pi(P_8))&=\frac{1}{(q;q)_n(q;q)_{n-1}(q;q)_m(q;q)_{m-1}}\cdot(T_1+T_2),
\end{align*}
where
$$T_1=\frac{(1-q^{3m+n+1})(1-q^{m+1})(1-q^{m+n+1})
-q(1-q^{3m+n+2})(1-q^m)(1-q^{m+n})}{(1-q^m)(1-q^{m+1})(1-q^{m+n})(1-q^{m+n+1})(1-q^{2m+1})(1-q^{2m+n+1})(1-q^{2m+2n+2})}$$
and
$$T_2=\frac{q^2(1-q^{3m+n+3})(1-q^m)(1-q^{m+n+1})-q(1-q^{3m+n+2})(1-q^{m+1})(1-q^{m+n+2})}{(1-q^m)(1-q^{m+1})
(1-q^{m+n+1})(1-q^{m+n+2})(1-q^{2m+1})(1-q^{2m+n+2})(1-q^{2m+2n+2})}.$$
Furthermore, the number of linear extensions of $P_8$ is
\begin{small}
\begin{align*}
e(P_8)=\frac{(2n+2m+2)!(17m^3+(19n+35)m^2+(7n^2+26n+22)m+(n+1)(n+2)^2)}{n!m!(n-1)!(m+1)!\cdot 2(m+n)(m+n+1)^2(2m+1)(2m+n+1)(m+n+2)(2m+n+2)}.
\end{align*}
\end{small}
\end{cor}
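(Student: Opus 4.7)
The plan is to apply Corollary \ref{Main-Corollary2} with a carefully chosen base poset $P$ whose bivariate generating function $F(x,y;P)$ captures the portion of $P_8$ below the extension. Reading Figure \ref{PlaneMN}, $P_8$ is obtained by taking a $V$-shaped base poset (two arms of length $m$ meeting at a common bottom element, with distinguished upper elements $A$ and $B$ satisfying $A\to B$), and then stacking an extension of $n-1$ applications of $\varphi_{z}$ followed by one application of $\psi_{z}$ on top. Thus $\overline{P}=P_8$ in the sense of Section 3.

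First I would compute $F(x,y;P)$ for the $V$-shaped base. This amounts to summing $x^{\sigma(A)} y^{\sigma(B)} q^{|\sigma|-\sigma(A)-\sigma(B)}$ over order-reversing maps on two chains joined at a minimum, which is a straightforward geometric-series computation and produces an explicit rational function in $x,y,q$ whose denominator contains factors like $(xq;q)_{m-1}$, $(q;q)_m$, together with a factor recording the common minimum. Because $P$ has a $V$-shape, the resulting $F(x,y;P)$ is not symmetric in $x,y$, which is precisely what will generate the two-piece structure $T_1+T_2$ in the final answer.

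Next I substitute into formula \eqref{FormuPF}:
\begin{align*}
\mathrm{PF}(\pi(P_8)) = \frac{F(q^n,q^{n+1};P) - q\,F(q^{n+1},q^n;P)}{(q;q)_n (q;q)_{n-1}}.
\end{align*}
The two terms $F(q^n,q^{n+1};P)$ and $qF(q^{n+1},q^n;P)$, when put over a common denominator, naturally split the numerator into two rational pieces: one contributing the factors $(1-q^{2m+1})$, $(1-q^{2m+n+1})$ and the bracket $(1-q^{3m+n+1})(1-q^{m+1})(1-q^{m+n+1})-q(1-q^{3m+n+2})(1-q^m)(1-q^{m+n})$ which is exactly $T_1$, and the other contributing $(1-q^{2m+n+2})$ and the second bracket appearing in $T_2$. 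Combining with the $(q;q)_m(q;q)_{m-1}$ factor coming from $F(x,y;P)$ itself and the $(q;q)_n(q;q)_{n-1}$ from \eqref{FormuPF} yields the stated closed form.

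For the number of linear extensions I use \eqref{linearExtension} with $p=\#P_8 = 2m+2n+2$, writing $e(P_8) = p!\cdot\bigl((1-q)^p \mathrm{PF}(\pi(P_8))\bigr)\big|_{q=1}$. The main obstacle will be the careful bookkeeping in this limit, since the denominator of $\mathrm{PF}(\pi(P_8))$ contains a double factor $(1-q^{m+n+1})(1-q^{m+n+2})$ together with $(1-q^{m+n})$, leading after cancellation to a double zero at $q=1$ in the reduced denominator (reflected in the $(m+n+1)^2$ in the stated formula). I would expand both $T_1+T_2$ and the denominator to sufficiently high order at $q=1$, using the standard $\lim_{q\to 1}(1-q^k)/(1-q)=k$ together with a next-order Taylor expansion to extract the cubic numerator $17m^3+(19n+35)m^2+(7n^2+26n+22)m+(n+1)(n+2)^2$. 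The algebra is routine but error-prone, so I would verify the final simplification with a computer algebra system as was done for the exact-counting examples earlier in the paper.
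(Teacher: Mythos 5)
There is a genuine gap: you have misread the structure of $P_8$ and consequently set up the wrong decomposition. In the paper, $P_8$ is not ``a $V$-shaped base of two chains with one two-rowed extension stacked on top''; it is the diamond poset of Figure \ref{Square1} with \emph{two} two-rowed extensions grown out of it, one attached to the comparable pair $(u_2,u_4)$ with parameter $n$ and one attached to the pair $(u_1,u_3)$ with parameter $m$. The paper's proof therefore applies the operator machinery \emph{twice}: first it sets $F(x,y;P)=D(x_1,x,x_3,y)$, applies $\psi_{z_n}\odot\varphi_{z_{n-1}}\odot\cdots\odot\varphi_{z_1}$ and specializes to get an intermediate function $\overline{D}(x_1,x_3)$, and then applies the $m$-fold operator chain to $\overline{D}$ in the variables $(x_1,x_3)$. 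This is what produces the symmetric pair of factors $(q;q)_n(q;q)_{n-1}$ and $(q;q)_m(q;q)_{m-1}$ (each instance of \eqref{FormuPF} contributes one such pair), and it is consistent with the element count $4+(2n-1)+(2m-1)=2n+2m+2$.

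Your single-application plan cannot be repaired into the stated formula. First, the generating function of a genuine $V$-poset (two chains joined at a minimum) does not have denominator $(q;q)_m(q;q)_{m-1}$ --- it carries a Gaussian-binomial numerator over a single $q$-factorial, since the two arms interleave freely --- so the claimed source of the $(q;q)_m(q;q)_{m-1}$ factor is wrong. Second, the numerator factors $1-q^{3m+n+1}$, $1-q^{3m+n+2}$, $1-q^{3m+n+3}$ in $T_1$ and $T_2$ come from the term $1-x_1^2x_2x_3$ in the numerator of Andrews's diamond generating function \eqref{DiamondDDXY} after the substitutions $x_1,x_3\mapsto q^m$ or $q^{m+1}$ and $x_2\mapsto q^n$ or $q^{n+1}$; a plain $V$ of two chains has no such quadratic numerator, so your construction could not produce these factors. (Your description of the base is also internally inconsistent: you posit two arms meeting at a common \emph{bottom} whose two distinguished upper elements satisfy $A\to B$, but the tops of a $V$-poset's arms are incomparable.) The final step, extracting $e(P_8)$ from \eqref{linearExtension} by a Taylor expansion at $q=1$, is fine in principle and matches the paper, but it is moot until the generating function is derived from the correct two-stage construction.
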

\begin{proof}
When $P$ is the poset in Figure \ref{Square1}, by \eqref{DiamondDDXY},
we have the generating function
\begin{align*}
D(x_1,x_2,x_3,x_4)=\frac{1-x_1^2x_2x_3}{(1-x_1)(1-x_1x_2)(1-x_1x_3)(1-x_1x_2x_3)(1-x_1x_2x_3x_4)}.
\end{align*}
We first take $F(x,y;P)=D(x_1,x,x_3,y).$
By \eqref{FormuPF}, we obtain
\begin{align*}
\overline{D}(x_1,x_3;P):=&\psi_{z_n}\odot \varphi_{z_{n-1}}\odot \varphi_{z_{n-2}}\odot \cdots\odot \varphi_{z_1}\odot F(x,y;P)\big|_{x=z_1=\cdots=z_n=q}
\\=&\frac{1}{(q;q)_n(q;q)_{n-1}}\bigg(\frac{1-x_1^2x_3q^n}{(1-x_1)(1-x_1q^n)(1-x_1x_3)(1-x_1x_3q^n)(1-x_1x_3q^{2n+1})}
\\ &\ \ \ \ \ \ \ \ \ \ -\frac{q(1-x_1^2x_3q^{n+1})}{(1-x_1)(1-x_1q^{n+1})(1-x_1x_3)(1-x_1x_3q^{n+1})(1-x_1x_3q^{2n+1})}\bigg).
\end{align*}
By \eqref{FormuPF} again, we obtain
\begin{align*}
\mathrm{PF}(\pi(P_8))&=\psi_{z_m}\odot \varphi_{z_{m-1}}\odot \varphi_{z_{m-2}}\odot \cdots\odot \varphi_{z_1}\odot \overline{D}(x,y;P)\big|_{x=z_1=\cdots=z_m=q}
\\&=\frac{1}{(q;q)_n(q;q)_{n-1}(q;q)_m(q;q)_{m-1}}(T_1+T_2).
\end{align*}
By \eqref{linearExtension}, we obtain the $e(P_8)$.
This completes the proof.
\end{proof}

\subsection{An Extension of the Ladder Poset}

When $k=r=0$ in Figure \ref{PlaneP8}, the poset is called the \emph{ladder poset} in \cite{BellBraun22}.

Now let's consider the poset $P_9$ in Figure \ref{PlaneP8}.
We mainly calculate the enumerative generating function of the set $\pi(P_9)$. Note that we allow $k\leq r$ in Figure \ref{PlaneP8}.

\begin{figure}[htp]
\centering
\includegraphics[width=16cm,height=3cm]{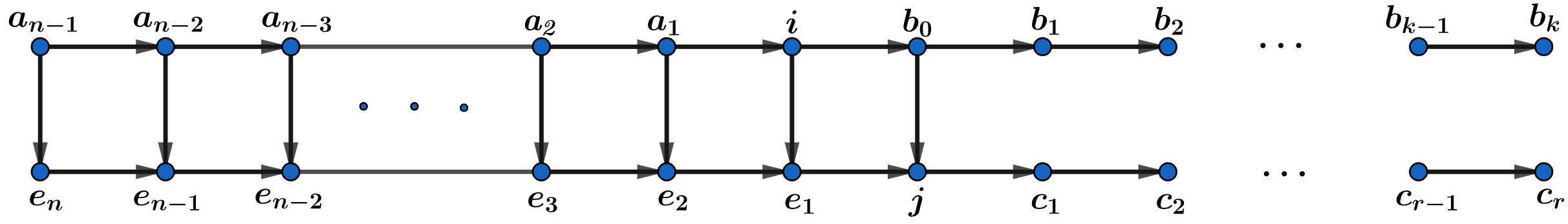}
\caption{The poset $P_9$.}
\label{PlaneP8}
\end{figure}

\begin{figure}[htp]
\centering
\includegraphics[width=9cm,height=3cm]{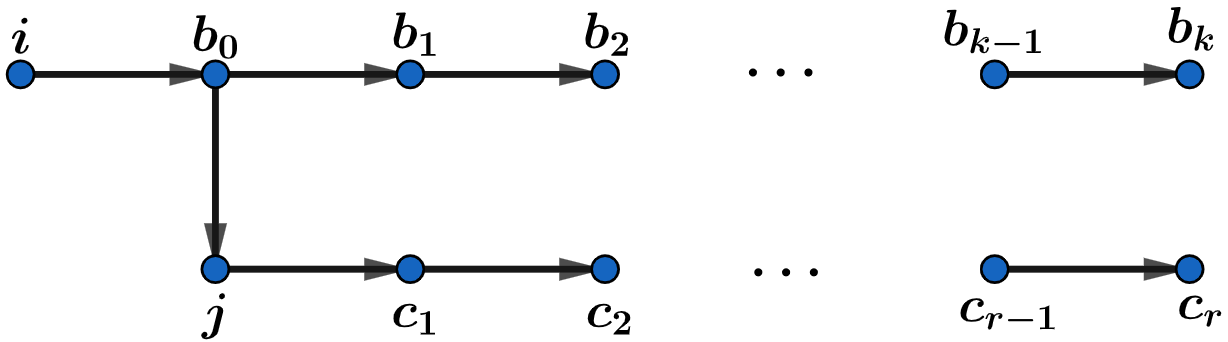}
\caption{The partial graph of poset $P_9$.}
\label{TwoRK}
\end{figure}

\begin{cor}
The enumerative generating function of the set $\pi(P_9)$ in Figure \ref{PlaneP8} is
\begin{align*}
\mathrm{PF}(\pi(P_9))&=\frac{1-q-q^{n+k+2}+q^{n+r+2}}{(q;q)_{n+r+1}(q;q)_{n+k+2}}
\\&\ \ +\sum_{i=0}^{r}\frac{(-1)^{i+1}q^{\frac{i(i+3)}{2}+n+1}\Big((1-q^{n+1})(1-q^{n+i})-(1-q^n)(1-q^{n+i+1})\Big)}
{(1-q^{n+i})(1-q^{n+i+1})(q;q)_n(q;q)_{n+1}(q;q)_i(q;q)_{r-i}(q^{2n+i+2};q)_{k+1}}.
\end{align*}
Furthermore, the number of linear extensions of $P_9$ is
\begin{align*}
e(P_9)=&\frac{(k-r+1)\cdot (2n+k+r+2)!}{(n+r+1)!(n+k+2)!}
\\&+\sum_{i=0}^r\frac{(-1)^{i+1}i\cdot (2n+i+1)!(2n+k+r+2)!}{(n+i)(n+i+1)\cdot n!i!(n+1)!(r-i)!(2n+k+i+2)!}.
\end{align*}
\end{cor}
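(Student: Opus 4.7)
The plan is to follow the template used in the earlier corollaries of this section: apply Corollary~\ref{Main-Corollary2} to an appropriate sub-poset $P$, then pass to the limit $q\to 1$ via \eqref{linearExtension}. The poset $P_9$ decomposes as the partial sub-poset $P$ shown in Figure~\ref{TwoRK} (carrying the parameters $k$ and $r$) attached to a two-rowed plane partition of length $n$ via the action of $\psi_{z_n}\odot \varphi_{z_{n-1}}\odot \cdots \odot \varphi_{z_1}$ on $F(x,y;P)$, where $x=q^{\sigma(A)}$ and $y=q^{\sigma(B)}$ track the two designated vertices $A$ and $B$ that are attached to the two-rowed extension.

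First, I would compute $F(x,y;P)$ in closed form for the partial sub-poset in Figure~\ref{TwoRK}. Since this sub-poset consists of a chain of length $r$ and a chain of length $k$ sitting above $A$ and $B$ with the indicated cover relations, a direct sum over labelings yields a rational expression whose denominators involve $q$-shifted factorials. A partial fraction expansion with respect to the $r+1$ ``merging points'' of the two chains (one for each index $i\in\{0,1,\ldots,r\}$) rewrites
\[
F(x,y;P)\;=\;F_0(x,y)+\sum_{i=0}^{r}F_i(x,y),
\]
where $F_0$ is a single rational factor and each $F_i$ carries the characteristic sign $(-1)^{i+1}$ and prefactor $q^{i(i+3)/2}$ visible in the target formula.

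Second, I would substitute this decomposition into \eqref{FormuPF}:
\[
\mathrm{PF}(\pi(P_9))\;=\;\frac{F(q^n,q^{n+1};P)-q\,F(q^{n+1},q^n;P)}{(q;q)_n(q;q)_{n-1}}.
\]
The contribution from $F_0$ is expected to telescope to $\dfrac{1-q-q^{n+k+2}+q^{n+r+2}}{(q;q)_{n+r+1}(q;q)_{n+k+2}}$, while each $F_i$ contributes precisely one summand of the sum in the claimed formula; in particular the numerator difference should collapse to $(1-q^{n+1})(1-q^{n+i})-(1-q^n)(1-q^{n+i+1})$ after the shifted substitution.

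Finally, the formula for $e(P_9)$ follows from \eqref{linearExtension} by taking $p!\cdot((1-q)^p\,\mathrm{PF}(\pi(P_9)))|_{q=1}$ with $p=\#P_9=2n+k+r+2$, using $(q;q)_m\sim m!\,(1-q)^m$ as $q\to 1$ and tracking the order of vanishing of $(1-q)$ in each summand. The main obstacle is Step~1: choosing the partial fraction decomposition of $F(x,y;P)$ so that the subsequent substitutions $x\mapsto q^n$ and $y\mapsto q^{n+1}$ reproduce exactly the claimed signs, $q$-powers and factorial indices. Getting the bookkeeping of the two arms of the ladder, the diagonal cover relations between them, and the interaction with the $\psi/\varphi$-operators aligned correctly is the real technical point; once that is in place the rest is routine simplification.
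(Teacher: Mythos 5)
Your proposal follows essentially the same route as the paper: the paper likewise computes $F(x,y;P)$ for the partial poset of Figure \ref{TwoRK} in exactly the partial-fraction form you predict (a plain term $1/\bigl((y;q)_{r+1}(x;q)_{k+2}\bigr)$ plus a sum over $i=0,\ldots,r$ of terms carrying the sign $(-1)^{i+1}$ and prefactor $yq^{i(i+3)/2}$), then substitutes $x\mapsto q^{n}$, $y\mapsto q^{n+1}$ into \eqref{FormuPF} and extracts $e(P_9)$ via \eqref{linearExtension}. The one step you flag as the main obstacle --- the explicit closed form for $F(x,y;P)$ --- is not derived in the paper either; it is quoted as an output of MacMahon's partition analysis with a citation to a reference in preparation, so your proposal matches the paper's level of detail.
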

\begin{proof}
When $P$ is the partial graph of $\pi(P_9)$ in Figure \ref{TwoRK},
we have the generating function
\begin{align*}
&F(x,y;P)=\sum_{i\geq b_0\geq b_1\geq \cdots\geq b_k\geq 0; \atop b_0\geq j\geq c_1\geq c_2\geq \cdots\geq c_r\geq 0}x^iy^j q^{b_0+b_1+\cdots+b_k+c_1+\cdots +c_r}
\\=& \frac{1}{(y;q)_{r+1}(x;q)_{k+2}}+\sum_{i=0}^{r} \frac{(-1)^{i+1}yq^{i(i+3)/2}}{(1-x)(1-yq^i)(q;q)_i(q;q)_{r-i}(xyq^{i+1};q)_{k+1}}.
\end{align*}
This formula is derived using MacMahon's partition analysis. See \cite{Omega-MPA} for a proof.
By \eqref{FormuPF}, the enumerative generating function of the set $\pi(P_9)$ in Figure \ref{PlaneP8} is
\begin{align*}
&\mathrm{PF}(\pi(P_9))=\frac{F(q^n,q^{n+1};P)-qF(q^{n+1},q^n;P)}{(q;q)_n (q;q)_{n-1}}
\\=&\frac{1}{(q;q)_n(q;q)_{n-1}}\left(\frac{1}{(q^{n+1};q)_{r+1}(q^{n};q)_{k+2}}
-\frac{q}{(q^n;q)_{r+1}(q^{n+1};q)_{k+2}}\right)
\\&\ \ \ +\sum_{i=0}^{r}\frac{q^{n+1}(-1)^{i+1}q^{i(i+3)/2}\Big((1-q^{n+1})(1-q^{n+i})-(1-q^n)(1-q^{n+i+1})\Big)}
{(q;q)_n(q;q)_{n+1}(1-q^{n+i+1})(1-q^{n+i})
(q;q)_i(q;q)_{r-i}(q^{2n+i+2};q)_{k+1}}
\\=&\frac{1-q-q^{n+k+2}+q^{n+r+2}}{(q;q)_{n+r+1}(q;q)_{n+k+2}}
\\&\ \ \ +\sum_{i=0}^{r}\frac{(-1)^{i+1}q^{\frac{i(i+3)}{2}+n+1}\Big((1-q^{n+1})(1-q^{n+i})-(1-q^n)(1-q^{n+i+1})\Big)}
{(1-q^{n+i})(1-q^{n+i+1})(q;q)_n(q;q)_{n+1}(q;q)_i(q;q)_{r-i}(q^{2n+i+2};q)_{k+1}}.
\end{align*}
By \eqref{linearExtension}, we obtain the $e(P_9)$.
This completes the proof.
\end{proof}

\section{Concluding Remark}
Our main contribution is Theorem \ref{FullTwoPlane}, which extends $P$-partitions by two-rowed plane partitions.
This idea enables us to obtain explicit expressions for enumerative generating functions and linear extensions of some posets.

Our next project is to extend $P$-partitions by \emph{$r$-rowed plane partitions}. Here $r$-rowed plane partitions are the rectangular plane partitions with at most $r$ rows and $c$ columns. We hope to construct operators similar to $\varphi_{z}$ and $\psi_{z}$, and find their combinatorial interpretations. This may 
help us better understand Gansner's elegant formula in Theorem \ref{Theorem-Gansner}. Our approach will extend and enrich the theory of $P$-partitions.

{\small \textbf{Acknowledgements:}
The authors would like to express their sincere appreciation for all suggestions for improving the presentation of this paper.
This work was partially supported by the National Natural Science Foundation of China [12071311].

\end{document}